\documentclass[12pt]{article}
\usepackage{amsmath}
\usepackage{amssymb}
\usepackage{amsthm}

\newtheorem{proposition}{Proposition}[section]
\newtheorem{remark}{Remark}[section]

\DeclareSymbolFont{AMSb}{U}{msb}{M}{n}
\DeclareMathSymbol{\Eout}{\mathbin}{AMSb}{"45}
\DeclareMathSymbol{\M}{\mathbin}{AMSb}{"4D}
\DeclareMathSymbol{\N}{\mathbin}{AMSb}{"4E}
\DeclareMathSymbol{\PR}{\mathbin}{AMSb}{"50}
\DeclareMathSymbol{\R}{\mathbin}{AMSb}{"52}
\DeclareMathSymbol{\SI}{\mathbin}{AMSb}{"53}
\DeclareMathSymbol{\bU}{\mathbin}{AMSb}{"55}
\DeclareMathSymbol{\bV}{\mathbin}{AMSb}{"56}
\DeclareMathSymbol{\bW}{\mathbin}{AMSb}{"57}
\DeclareMathSymbol{\bX}{\mathbin}{AMSb}{"58}
\DeclareMathSymbol{\bY}{\mathbin}{AMSb}{"59}
\DeclareMathSymbol{\bZ}{\mathbin}{AMSb}{"5A}

\title{Transfer Entropy and Directed Information in Gaussian Diffusion Processes}

\author{Nigel J.~Newton
   \thanks{School of Computer Science and Electronic Engineering, University of Essex,
   Wivenhoe Park, Colchester, CO4 3SQ, UK. ({\tt njn@essex.ac.uk})}}

\begin{document}
\maketitle

\begin{abstract}
\noindent
{\em Transfer Entropy} and {\em Directed Information} are information-theoretic measures
of the {\em directional dependency} between stochastic processes.  Following
the definitions of Schreiber and Massey in discrete time, we define and evaluate these
measures for the components of multidimensional Gaussian diffusion processes.  When the
components are jointly Markov, the Transfer Entropy and Directed Information are both
measures of {\em influence} according to a simple physical principle.  More generally,
the effect of other components has to be accounted for, and this can be achieved in more
than one way.  We propose two definitions, one of which preserves the properties of
influence of the jointly Markov case. The Transfer Entropy and Directed Information
are expressed in terms of the solutions of matrix Riccati equations, and so are easy to
compute.  The definition of continuous-time Directed Information we propose differs from
that previously appearing in the literature. We argue that the latter is not strictly directional.

Keywords: Causality, Diffusions, Directed Information, Information Flow,
Nonequilibrium Statistical Mechanics, Transfer Entropy.

2010 MSC: 60J60 60J70 62B10 82C31 93E11 94A17
\end{abstract}

\newcommand{\ca}{{\cal A}}
\newcommand{\cb}{{\cal B}}
\newcommand{\cd}{{\cal D}}
\newcommand{\cf}{{\cal F}}
\newcommand{\cg}{{\cal G}}
\newcommand{\ch}{{\cal H}}
\newcommand{\cl}{{\cal L}}
\newcommand{\cm}{{\cal M}}
\newcommand{\cn}{{\cal N}}
\newcommand{\cp}{{\cal P}}
\newcommand{\cq}{{\cal Q}}
\newcommand{\cs}{{\cal S}}
\newcommand{\cu}{{\cal U}}
\newcommand{\cv}{{\cal V}}
\newcommand{\cw}{{\cal W}}
\newcommand{\cx}{{\cal X}}
\newcommand{\cy}{{\cal Y}}
\newcommand{\cz}{{\cal Z}}

\newcommand{\bbar}{{\bar{b}}}
\newcommand{\Vbar}{{\bar{V}}}
\newcommand{\Wbar}{{\bar{W}}}
\newcommand{\Xbar}{{\bar{X}}}
\newcommand{\sigmabar}{{\bar{\sigma}}}

\newcommand{\atil}{{\tilde{a}}}
\newcommand{\btil}{{\tilde{b}}}
\newcommand{\Dtil}{{\tilde{D}}}
\newcommand{\etil}{{\tilde{e}}}
\newcommand{\ktil}{{\tilde{k}}}
\newcommand{\ltil}{{\tilde{l}}}
\newcommand{\ntil}{{\tilde{n}}}
\newcommand{\qtil}{{\tilde{q}}}
\newcommand{\Rtil}{{\tilde{R}}}
\newcommand{\Ttil}{{\tilde{T}}}
\newcommand{\util}{{\tilde{u}}}
\newcommand{\Wtil}{{\tilde{W}}}
\newcommand{\Xtil}{{\tilde{X}}}
\newcommand{\Ytil}{{\tilde{Y}}}
\newcommand{\lambdatil}{{\tilde{\lambda}}}
\newcommand{\gammatil}{{\tilde{\gamma}}}
\newcommand{\rhotil}{{\tilde{\rho}}}
\newcommand{\sigmatil}{{\tilde{\sigma}}}
\newcommand{\Xitil}{{\tilde{\Xi}}}

\newcommand{\bfU}{{\bf U}}
\newcommand{\bfV}{{\bf V}}
\newcommand{\bfW}{{\bf W}}

\newcommand{\Mhat}{\hat{M}}
\newcommand{\Xhat}{\hat{X}}
\newcommand{\Zhat}{\hat{Z}}

\newcommand{\ahac}{{\breve{a}}}
\newcommand{\Xhac}{{\breve{X}}}
\newcommand{\Yhac}{{\breve{Y}}}

\newcommand{\rank}{{\hbox{\rm rank}}}
\newcommand{\tr}{{\hbox{\rm tr}}}
\newcommand{\half}{{\frac{1}{2}}}
\newcommand{\fndot}{{\,\cdot\,}}
\newcommand{\cond}{{\,|\,}}

\section{Introduction} \label{se:intro}

{\em Transfer Entropy} and {\em Directed Information} are information-theoretic measures
of the {\em directional dependency} between stochastic processes.  They quantify the
statistical dependency between the past of one process and the future of another, and are
thus connected with notions of {\em causality} in physical systems.  Directed information
was developed in the context of Telecommunications, as a directional variant of the
{\em mutual information} between the input and output data sequences in a communication
channel \cite{mass1}.  Transfer entropy, on the other hand, was developed within the
Physical Sciences community as a means of testing for directional influence in complex
systems \cite{schr1}.  It has been applied in fields as diverse as {\em Neuroscience},
{\em Systems Biology}, {\em Climatology} and {\em Econometrics}. (See \cite{ammi1}
for a review of both quantities in a discrete-time context, their
connections with {\em Granger Causality}, and an extensive bibliography.)

This paper defines and evaluates the transfer entropy and directed information between
components of continuous-time Gaussian diffusion processes, expressing both quantities in
terms of the solutions of matrix Riccati equations.  In this setting,
both quantities lead to a single infinitesimal {\em rate} of information transfer.  Many
of the results herein carry over to diffusion processes with {\em nonlinear} dynamics.
However, such extensions make use of the theory of nonlinear filtering \cite{lish1}, and
so introduce considerable technicalities, which can all too easily obscure the meaning
of the results.
 
Information theory has its origins in telecommunications \cite{shan1}, where it provides
a fundamental limit on the rate at which data can be reliably communicated over
error-prone communication channels.  The central quantities in information theory are the
{\em mutual information} between two random variables, and its conditional variant.
{\em Conditional mutual information} is a measure of conditional dependency between a
pair of random variables, $U:\Omega\rightarrow\bfU$ and $V:\Omega\rightarrow\bfV$,
defined on a common probability space $(\Omega,\cg,\PR)$.  If $\bfU$ and $\bfV$ are
Polish spaces with Borel $\sigma$-algebras $\cu$ and $\cv$, and $\ch$ is a
sub-$\sigma$-algebra of $\cg$ then a regular $\ch$-conditional probability distribution
exists for $(U,V)$: $P_{UV|\ch}:\cu\times\cv\times\Omega\rightarrow[0,1]$.  Let
$P_{U|\ch}$ and $P_{V|\ch}$ be its marginals.  The $\ch$-conditional mutual information
is then
\begin{equation}
I(U;V|\ch) = \Eout \cd(P_{UV|\ch}\cond P_{U|\ch}\otimes P_{V|\ch}), \label{eq:mutinf}
\end{equation}
where $\cd(P\cond Q)$ is the {\em Kullback-Leibler divergence} between two probability measues,
$P$ and $Q$, defined on a common measurable space:
\begin{equation}
\cd(P\cond Q)
  = \left\{\begin{array}{ll}
           \int\frac{dP}{dQ}\log\left(\frac{dP}{dQ}\right)dQ & \text{if }P\ll Q \\
           + \infty & \text{otherwise}.
           \end{array}\right. \label{eq:kldiv}  
\end{equation}
The integral in (\ref{eq:kldiv}) is well defined since (with the convention $0\log 0=0$)
the integrand is bounded from below; the expectation in (\ref{eq:mutinf}) is well defined
since $\cd(P\cond Q)\ge 0$.  The (unconditional) mutual information $I(U;V)$ is obtained
by setting $\ch=\{\emptyset, \Omega\}$ in (\ref{eq:mutinf}).

$I(U;V|\ch)$ is non-negative; it is zero if and only if $U$ and $V$ are
$\ch$-conditionally independent.  An important property, of which we shall make frequent
use, is the {\em chain rule}; this states that
\begin{equation}
I(U;(V,W)|\ch) = I(U;V|\ch) + I(U;W|\ch\vee\sigma(V)),  \label{eq:chain}
\end{equation}
where $W:\Omega\rightarrow\bfW$ is a third random variable, and $\sigma(V)$ is the
$\sigma$-algebra generated by $V$.  The reader is referred to \cite{coth1} and
\cite{pins1} for a systematic development of such theorems of information theory;
\cite{pins1}, in particular, develops the subject at an appropriate level of abstraction.

If $\bfU$ and $\bfV$ are the path spaces of continuous-time semimartingales, then
Girsanov's theorem can be used in the computation of $I(U;V|\ch)$.  The mutual information
between a ``signal process'' $X$ and an ``observation process'' $Y$, comprising the signal
and independent additive Gaussian white noise, was first found in this way in
\cite{dunc1}.  Duncan's results were extended to the case of partial independence in
\cite{kazz1}, and to a more abstract result on Wiener space in \cite{maza2}.  The
connection between the path mutual information and the instantaneous mutual information
in this context was first investigated in \cite{maza1}.  These ideas were connected with
information and energy flows in statistical mechanical systems in \cite{mine2} and
\cite{newt3}, and with time-reversal and notions of {\em dual} linear and nonlinear
filters in \cite{newt1} and \cite{newt2}.  These papers all address problems in which
there is ``one-way influence'' between the components whose mutual information is sought,
and so lead to notions of directional {\em information flow}.

Transfer entropy and directed information were both developed for problems in which there
is ``two-way influence'' between processes.  Their definitions are based on
mutual information, and measure the dependency between the past of one process and the
future of another.  A notion of directed information between stationary discrete-time
processes appeared in the context of {\em bidirectional communication} in
\cite{mark1}.  It was refined in \cite{mass1}, and used to study communication channels
with feedback.  Massey's definition is as follows:
\begin{equation}
D_{X\rightarrow Y}^M(N)
  = \sum_{n=1}^N I((X_1, \ldots, X_n);Y_n\cond Y_1, \ldots, Y_{n-1}), \label{eq:massdi}
\end{equation}
where $(X_1,X_2,\ldots)$ is the input sequence to a communication channel, and
$(Y_1,Y_2,\ldots)$ is the corresponding output sequence.  (By definition, an empty
sequence generates the trivial $\sigma$-algebra $\{\emptyset, \Omega\}$.)  The term
``transfer entropy'' was first used in \cite{schr1}.  Schreiber's definition is as
follows:
\begin{equation}
T_{X\rightarrow Y}(k,l,n)
  = I((X_{n-l}, \ldots, X_{n-1});Y_n\cond Y_{n-k}, \ldots, Y_{n-1}); \label{eq:schrdef}
\end{equation}
it is often described as a measure of ``the disambiguation on the future of $Y$,
over and above that provided by the past of $Y$, afforded by the past of $X$''.  It is
frequently used with the common value $k=l=n-1$ for the history lengths of $X$ and $Y$,
in which case its cumulative variant,
\begin{eqnarray}
D_{X\rightarrow Y}(N)
  & = & \sum_{n=2}^N T_{X\rightarrow Y}(n-1,n-1,n) \nonumber \\
        [-1.5ex] \label{eq:symdi} \\ [-1.5ex]
  & = & \sum_{n=2}^N I((X_1, \ldots, X_{n-1});Y_n\cond Y_1, \ldots, Y_{n-1}), \nonumber
\end{eqnarray}
is similar to $D_{X\rightarrow Y}^M(N)$.

$D_{X\rightarrow Y}$ seems a more natural
definition of directed information since it does not involve the mutual information
between the values of $X$ and $Y$ at the same time instant, and so is more faithful to
the notion of directional dependency.  However, it is important to remember that Massey's
definition was made in the context of a specific application.  In \cite{mass1}, $Y_n$ is
the output of a communication channel corresponding to the input $X_n$, and so $Y_n$ actually occurs {\em after} $X_n$ in physical time (but before $X_{n+1}$), a notion that
can be made mathematically explicit by switching to a finer time scale.  Let $\Ytil_1=0$
and, for any $m\in\N$, let $\Xtil_m=X_{\lceil m/2 \rceil}$ and
$\Ytil_{m+1}=Y_{\lfloor (m+1)/2 \rfloor}$, where $\lceil \fndot \rceil$ and
$\lfloor \fndot \rfloor$ are the ceiling and floor functions, respectively.  Note that
$\Xtil$ changes only when $m$ increments from an even value, whereas $\Ytil$ changes only
when $m$ increments from an odd value, reflecting the causal relations of communication
channels with feedback.  An easy calculation shows that
\begin{equation}
D_{X\rightarrow Y}^M(N) = D_{\Xtil\rightarrow \Ytil}(2N). \label{eq:direl}
\end{equation}
(NB.~alternate terms in the sum on the right-hand side here are zero.)
The use of $D_{X\rightarrow Y}$ rather than $D_{X\rightarrow Y}^M$ also symmetrises
Massey's ``law of conservation of directed information'' \cite{mass2}, thus avoiding
shifted sequences.  For a general pair of sequences $(X_1,X_2,\ldots)$ and
$(Y_1,Y_2,\ldots)$:
\begin{eqnarray}
I((X_1,\ldots,X_N);(Y_1,\ldots,Y_N))
  & = & D_{X\rightarrow Y}(N) + D_{Y\rightarrow X}(N) \nonumber \\
        [-1.6ex] \label{eq:lawcondi} \\ [-1.6ex]
  &   & \; + \sum_{n=1}^N I(X_n;Y_n|X_1,Y_1,\ldots,X_{n-1},Y_{n-1}). \nonumber
\end{eqnarray}
The third term on the right-hand side here (called the ``instantaneous information
exchange'' in \cite{ammi1}) is zero if $X_n$ and $Y_n$ are independent, conditioned on
their pasts, which is the case for the sequences $\Xtil$ and $\Ytil$ defined above.

Since it is more faithful to the notion of directional dependency, we shall base our
definition of continuous-time directed information on $D_{X\rightarrow Y}$ rather than
$D_{X\rightarrow Y}^M$.  A definition based on $D_{X\rightarrow Y}^M$ was developed
in \cite{wekp1}, in a very general context.  However, the hypotheses used are somewhat
unripe and, like $D_{X\rightarrow Y}^M$, the information quantity obtained is not
strictly directional.

In applications, one often wants to identify
{\em causal influence} between stochastic processes.  In Neuroscience, for example, it
is useful to know which group of neurons causes which to fire.  Of course, the existence
of statistical dependency between the past of one process and the future of another is
no guarantee of causation; this can only be determined by examining the underlying
physics (and usually requires physical intervention) \cite{pear1}.  This cannot be done
in the abstract setting of this paper.  However, in order to motivate some of the
definitions we give, it is useful to have a {\em Principle of Influence} between
processes, that corresponds to causal influence in many applications.
It attributes a physical property to the time variable.
\begin{enumerate}
\item[PI:] {\em The future of one process cannot influence the past of another}.
\end{enumerate}
According to this principle, any statistical dependency between the past of one process
and the future of another arises from a combination of the influence that the former
has on the latter, and the influence that other processes have on both.  In the context of
a closed system having a filtration $(\cf(t),0\le t<\infty)$ to which all processes
are adapted, a process $X$ that is Markov with respect to $\cf$ (meaning that
$X(t)$ is $X(s)$-conditionally independent of $\cf(s)$ for all $s\le t$)
cannot be influenced by any other process.  Section \ref{se:ccgdp} of this paper defines
and evaluates the transfer entropy and directed information between two processes that
are jointly Markov in this sense;  according to PI, they measure one-way influence.
When processes are not jointly Markov, there is more than one plausible definition of
transfer entropy and directed information.  We explore marginal and conditional variants
of two such definitions in section \ref{se:threecomp}, only one of which preserves the
properties of influence of the jointly Markov case.

The following notation will be used fequently in what follows:
\begin{itemize}
\item $\M_{m,n}$ is the set of real matrices of dimension $m\times n$;
\item $\SI_n^{++}\subset\SI_n^+\subset\SI_n\subset\M_{n,n}$ are the subsets of
  positive-definite, positive-semi-definite, and symmetric $n\times n$ matrices,
  respectively;
\item $I_n\in\SI_n^{++}$ is the multiplicative identity matrix;
\item For $\mu\in\R^n$ and $v\in\SI_n^+$, $N(\mu,v)$ is the multivariate Gaussian
  distribution with mean vector $\mu$ and covariance matrix $v$. 
\end{itemize}

\section{Jointly Markov Processes} \label{se:ccgdp}

Let $(\Omega,\cg,\PR)$ be a complete probability space, on which is defined a filtration
$(\cf(t)\subset\cg,0\le t<\infty)$.  (All filtrations will be assumed to satisfy the
``usual conditions'' \cite{rowi2}.)  For some $n\ge 2$,
let $(X(t)\in\R^n,0\le t<\infty)$ be a continuous, Gaussian, $\cf$-diffusion
process defined on $\Omega$, with initial distribution $N(\mu,v)$, drift vector
$b(t)X(t)$ and diffusion matrix $a(t)$, where $\mu\in\R^n$, $v\in\SI_n^+$, and
$b:[0,\infty)\rightarrow\M_{n,n}$ and $a:[0,\infty)\rightarrow\SI_n^+$ are measurable
functions satisfying
\begin{equation}
\int_0^t \left(\|b(s)\|+\|a(s)\|\right)ds < \infty
  \quad\text{for all }0\le t<\infty. \label{eq:coefbnd}
\end{equation}
By this we mean that the distribution of $X(0)$ is $N(\mu,v)$, $X$ is adapted to $\cf$,
and the process $(M,\cf)$, defined by
\begin{equation}
M(t) = X(t) - X(0) - \int_0^t b(s)X(s)\,ds, \label{eq:Mdef}
\end{equation}
is a non-standard $\R^n$-valued Brownian motion with quadratic covariation
$[M](t)=\int_0^t a(s)\,ds$.

For some $n_1,n_2\ge 1$ with $n_1+n_2=n$, let
\begin{equation}
e_1 = \begin{bmatrix}
      I_{n_1} & 0
      \end{bmatrix}\in\M_{n_1,n}\quad\text{and}\quad
e_2 = \begin{bmatrix}
      0 & I_{n_2}
      \end{bmatrix}\in\M_{n_2,n}. \label{eq:eidef}
\end{equation}
We shall use the notation $X_i:=e_iX$, $\mu_i:=e_i\mu$, $v_{ij}:=e_ive_j^\prime$,
$b_{ij}:=e_ibe_j^\prime$ and $a_{ij}:=e_iae_j^\prime$ for the block components of $X$,
$\mu$, $v$, $b$ and $a$.  This section defines and evaluates the transfer entropy and
the directed information from $X_2$ to $X_1$.  The values of these quantities between
different components of $X$ can be found by using the methods herein on the process
$\pi X$ for an appropriately chosen non-singular matrix $\pi\in\M_{n,n}$.

\subsection{A Directed Representation for $X$} \label{se:direct}

We shall make use of the following hypotheses:
\begin{enumerate}
\item[(H1)] for any $0\le t<\infty$, $\rank(a_{11}(t))=k>0$;
\item[(H2)] for any $0\le t<\infty$, $b_{12}(t) = a_{11}(t) \gamma(t)$ 
  for some measurable function $\gamma:[0,\infty)\rightarrow\M_{n_1,n_2}$, satisfying
  \begin{equation}
  \int_0^t \|\gamma(s)^\prime (I_{n_1}+a_{11}(s))\gamma(s)\|\,ds < \infty
    \quad\text{for all }0\le t<\infty. \label{eq:gambnd}
  \end{equation}
\end{enumerate}
Let $a_{11}(t) = u(t)\lambda(t)u(t)^\prime$ be the ``reduced'' eigen-decomposition of
$a_{11}(t)$.  By this, we mean that $\lambda(t)\in\SI_k^{++}$ is a diagonal matrix
containing the {\em non-zero} eigenvalues of $a_{11}(t)$, and the columns of
$u(t)\in\M_{n_1,k}$ are the corresponding (orthonormal) eigenvectors.  Let
$\sigma(t)\in\M_{n,k+n_2}$ and $a^W(t)\in\SI_{k+n_2}^+$ be defined as follows:
\begin{equation}
\sigma(t)
  = \begin{bmatrix}
    \sigma_{11}(t) & 0 \\
    \sigma_{21}(t) & I_{n_2}
    \end{bmatrix}\quad\text{and}\quad
a^W(t)
  = \begin{bmatrix}
    I_k & 0 \\
    0   & \alpha(t)
    \end{bmatrix}, \label{eq:sigmadef}
\end{equation}
where $\sigma_{i1}(t)=a_{i1}(t)u(t)\lambda(t)^{-1/2}$ ($i=1,2$), and
\begin{equation}
\alpha(t) = a_{22}(t) - \sigma_{21}(t)\sigma_{21}(t)^\prime\in\SI_{n_2}^+;
            \label{eq:alphadef}
\end{equation}
then $a=\sigma a^W\sigma^\prime$.  Let $W$ and $\Mhat$ be defined as follows:
\begin{eqnarray}
W(t)
  & = & \begin{bmatrix} W_1(t) \\ W_2(t) \end{bmatrix}
         = \int_0^t (\sigma(s)^\prime\sigma(s))^{-1}\sigma(s)^\prime\,dM(s) \nonumber \\
        [-1.5ex] \label{eq:Wdef} \\ [-1.5ex]
\Mhat(t)
  & = & \int_0^t \sigma(s)\, dW(s), \nonumber
\end{eqnarray}
where $W_1(t)\in\R^k$ and $W_2(t)\in\R^{n_2}$.
$(W,\cf)$ is a non-standard $(k+n_2)$-vector Brownian motion with quadratic covariation
$[W](t)=\int_0^t a^W(s)ds$ and, since $\sigma(\sigma^\prime\sigma)^{-1}\sigma^\prime a=a$,
the process $(M-\Mhat,\cf)$ is a non-standard $n$-vector Brownian motion with quadratic
covariation zero.  So $M$ and $\Mhat$ are indistinguishable.

We factorise the initial covariance matrix, $v$, in a similar way. Let $l=\rank(v_{11})$,
and let $\psi\in\M_{n,l+n_2}$, $v^\Xi\in\SI_{l+n_2}^+$ and $\phi\in\SI_{n_2}^+$ be
defined as follows:
\begin{equation}
\psi  = \begin{bmatrix}
        \psi_{11} & 0 \\
        \psi_{21} & I_{n_2}
        \end{bmatrix},\quad
v^\Xi = \begin{bmatrix}
        I_l & 0 \\
        0   & \phi
        \end{bmatrix}\quad\text{and}\quad
\phi = v_{22} - \psi_{21}\psi_{21}^\prime\in\SI_{n_2}^+, \label{eq:psidef}
\end{equation}
where $\psi_{i1}=v_{i1}u_0\lambda_0^{-1/2}$ ($i=1,2$), and
$v_{11}=u_0\lambda_0 u_0^\prime$ is the reduced eigen-decomposition of $v_{11}$.
(If $l=0$ then $\psi_{11}$ and $\psi_{21}$ are void, and $v^\Xi=\phi$.)  Let
\begin{equation}
\Xi = \begin{bmatrix}
      \Xi_1 \\ \Xi_2
      \end{bmatrix}
    = (\psi^\prime\psi)^{-1}\psi^\prime(X(0)-\mu), \label{eq:Xidef}
\end{equation}
where $\Xi_1\in\R^l$ and $\Xi_2\in\R^{n_2}$.  ($\Xi_1$ is
void if $l=0$.)  Straightforward calculations show that $\Eout\|X(0)-\mu-\psi\Xi\|^2=0$,
and that $\Xi$ has the Gaussian distribution, $N(0,v^\Xi)$.

The foregoing arguments show that $X$ satisfies the following It\^{o} equation:
\begin{equation}
X(t) = \mu + \psi\Xi + \int_0^t b(s)X(s)\,ds
           + \int_0^t\sigma(s)\,dW(s). \label{eq:Xrep}
\end{equation}

\begin{remark} \label{re:canon}
In numerical implementations of the eigen-decomposition of $a_{11}$, it
may be difficult to distinguish between an eigenvalue that is zero and one that is
merely small, resulting in uncertainty about the ``noise'' dimension $k$.  Although not
mathematically necessary, the inclusion of the identity matrix in (\ref{eq:gambnd})
ensures that the transfer entropy and directed information of sections
\ref{se:transent} and \ref{se:dirinf} are not sensitive to the choice of $k$ in such
cases.  $X$ is often defined in the form (\ref{eq:Xrep}), ``up-front''.
(See, for example, \cite{sdnm1}.)
\end{remark}

In order to compute the disambiguation on the future of $X_1$ afforded by its past, it
is convenient to bring the relevant part of that past into the present.  This can be
achieved by means of a Kalman-Bucy filter for $X$ based on ``observations'' of $X_1$.
Let $\cf_1$ be the filtration generated by $X_1$, let
$(q_2(t)\in\SI_{n_2}^+,0\le t<\infty)$ satisfy the matrix Riccati equation:
\begin{eqnarray}
q_2(0)
  & = & \phi \nonumber \\
        [-1.5ex] \label{eq:riccati} \\ [-1.5ex]
\dot{q}_2
  & = & (b_{22}-a_{21}\gamma)q_2 + q_2(b_{22}-a_{21}\gamma)^\prime
        + \alpha - q_2\gamma^\prime a_{11}\gamma q_2, \nonumber
\end{eqnarray}
where $\phi$, $\gamma$ and $\alpha$ are as defined in (\ref{eq:psidef}), (H2) and
(\ref{eq:alphadef}), respectively, and let
$((\Xhat,\Wbar_1)(t)\in\R^{n+k},0\le t<\infty)$ satisfy the It\^{o} equation:
\begin{eqnarray}
\Xhat(0)
  & = & \left\{\begin{array}{ll}
        \mu & \text{if }l=0 \\
        \mu + \psi\begin{bmatrix} I_l & 0\end{bmatrix}^\prime\Xi_1 &  \text{otherwise}
        \end{array}\right.,\quad \Wbar_1(0)=0 \nonumber \\
d\Xhat
  & = & b\Xhat dt + \left(\sigma\begin{bmatrix} I_k & 0\end{bmatrix}^\prime
         + e_2^\prime q_2\gamma^\prime\sigma_{11}\right)d\Wbar_1\quad
         \label{eq:filter} \\
d\Wbar_1
  & = & \sigma_{11}^\prime\gamma e_2(X-\Xhat)dt + dW_1. \nonumber
\end{eqnarray}

\begin{proposition} \label{pr:filters}
If (H1) and (H2) hold, then:
\begin{enumerate}
\item[(i)] for any $0\le t<\infty$, the Gaussian distribution
$N(\Xhat(t),e_2^\prime q_2(t)e_2)$ is a regular $\cf_1(t)$-conditional distribution for
$X(t)$;
\item[(ii)] $(\Wbar_1,\cf_1)$ is an $\R^k$-valued standard Brownian motion;
\item[(iii)] for any $0\le s\le t<\infty$,
$\cf_1(t)=\cf_1(s)\vee\sigma(\Wbar_1(r)-\Wbar_1(s),\,s\le r\le t)$.
\end{enumerate}
\end{proposition}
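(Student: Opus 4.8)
The plan is to recognize equations (\ref{eq:riccati})--(\ref{eq:filter}) as the standard Kalman--Bucy filter for the partially observed linear Gaussian system obtained by treating $X_1$ as the observation and $X_2$ as the hidden state, and then to verify the three claims using the innovations approach. First I would rewrite the It\^o representation (\ref{eq:Xrep}) in a ``signal--observation'' form. Using (H2), the drift of $X_1$ is $b_{11}X_1 + b_{12}X_2 = b_{11}X_1 + a_{11}\gamma X_2$, so the observation equation is $dX_1 = (b_{11}X_1 + a_{11}\gamma X_2)\,dt + \sigma_{11}\,dW_1$ with $[\,\sigma_{11}W_1\,](t) = \int_0^t a_{11}$; note $\sigma_{11}\sigma_{11}^\prime = u\lambda u^\prime = a_{11}$, so the observation noise covariance is exactly $a_{11}$, which has constant rank $k$ by (H1). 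The state equation for $X_2$ reads $dX_2 = (b_{21}X_1 + b_{22}X_2)\,dt + \sigma_{21}\,dW_1 + dW_2'$ where $W_2'$ is the part of the driving noise with covariation $\alpha$; the cross-covariation between signal and observation noises is $\sigma_{21}\sigma_{11}^\prime = a_{21}$, i.e.\ the ``correlated noise'' Kalman--Bucy setting. One then checks that (\ref{eq:riccati}) is precisely the Riccati equation for this correlated-noise filter (the term $b_{22}-a_{21}\gamma$ is the effective state matrix after accounting for the observation, $\alpha$ is the state noise, and $\gamma^\prime a_{11}\gamma = \gamma^\prime\sigma_{11}\sigma_{11}^\prime\gamma$ is the ``observation information''), and that (\ref{eq:filter}) is the corresponding filter with innovation $d\Wbar_1 = \sigma_{11}^{-1}$-normalized innovations. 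Since $X_1$ is observed exactly and $a_{11}$ may be singular within $\R^{n_1}$, care is needed: the genuine scalar observations live in the $k$-dimensional range of $a_{11}$, which is why $\Wbar_1$ is $\R^k$-valued; I would handle this by working with the reduced noise $W_1\in\R^k$ and the map $\sigma_{11}:\R^k\to\R^{n_1}$ throughout.

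For part (i), the cleanest route is a direct verification rather than invoking a filtering theorem verbatim. Define the error $\etil := X - \Xhat$. Subtracting (\ref{eq:filter}) from (\ref{eq:Xrep}) (in It\^o form) gives a linear SDE for $\etil$ driven by $W$ and $\Wbar_1$; because $\Xhat(0)$ and $X(0)$ differ only in the $\Xi_2$-direction (the unobserved part of the initial condition), $\etil(0) = \psi e_2^\prime\Xi_2$ has covariance $e_2^\prime\phi e_2$, consistent with $q_2(0)=\phi$. One shows: (a) $e_1\etil \equiv 0$ a.s., so the error lives in the $X_2$-block — this follows because the $\Wbar_1$-coefficient was chosen so that the $e_1$-components of $X$ and $\Xhat$ have identical dynamics (the filter reproduces the observation exactly); (b) $\etil(t)$ is $\cf_1(t)$-conditionally Gaussian with mean zero and deterministic covariance $e_2^\prime q_2(t) e_2$, obtained by writing out $d(\etil\etil^\prime)$ by It\^o's rule, taking $\cf_1(t)$-conditional expectation, and matching with (\ref{eq:riccati}); and (c) $\Xhat(t)$ is $\cf_1(t)$-measurable, which is immediate from (\ref{eq:filter}) once (iii) gives $\cf_1 = $ the filtration of $\Wbar_1$ augmented by the initial $\cf_1(0)$. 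Conditional Gaussianity of $\etil$ given $\cf_1(t)$ plus $\cf_1(t)$-measurability of $\Xhat(t)$ then yields claim (i).

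For parts (ii) and (iii) I would use L\'evy's characterization and an innovations argument. By construction $d\Wbar_1 = \sigma_{11}^\prime\gamma e_2\etil\,dt + dW_1$; since $[\Wbar_1](t)=[W_1](t)=\int_0^t I_k\,ds = t\,I_k$, it suffices to show $(\Wbar_1,\cf_1)$ is a martingale, equivalently that the finite-variation part above is the $\cf_1$-compensator of something that vanishes — concretely, that $\sigma_{11}^\prime\gamma e_2\etil\,dt$ is exactly the drift correction making $dX_1$ an $\cf_1$-semimartingale with respect to its own filtration. This is the standard fact that the innovations process of a Kalman--Bucy filter is a Brownian motion in the observation filtration; I would either cite \cite{lish1} or give the short self-contained argument: $\Wbar_1$ is $\cf_1$-adapted and, for $s\le t$, $\Eout(\Wbar_1(t)-\Wbar_1(s)\cond\cf_1(s)) = \int_s^t \sigma_{11}^\prime\gamma e_2\,\Eout(\etil(r)\cond\cf_1(s))\,dr$, and one shows $\Eout(\etil(r)\cond\cf_1(r))=0$ propagates to $\Eout(\etil(r)\cond\cf_1(s))=0$ for $r\ge s$ using the linear dynamics of $\etil$ and that its driving noises are $\cf_1$-martingales past time $s$. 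For (iii), the inclusion $\cf_1(s)\vee\sigma(\Wbar_1(r)-\Wbar_1(s):s\le r\le t)\subset\cf_1(t)$ is clear since $\Wbar_1$ is $\cf_1$-adapted; the reverse inclusion follows because, on $[s,t]$, $(X_1,\Xhat)$ solves a linear SDE driven only by $\Wbar_1$ with $\cf_1(s)$-measurable initial data (at time $s$, $\Xhat(s)$ is $\cf_1(s)$-measurable by (i)'s proof and $X_1(s)$ trivially so), hence $X_1(t)$ is measurable w.r.t.\ the right-hand $\sigma$-algebra, giving $\cf_1(t)$ on the left.

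The main obstacle I anticipate is bookkeeping around the rank-deficiency of $a_{11}$ (and of $v_{11}$): one must consistently work with the $k$-dimensional reduced noise and the injective-on-its-range map $\sigma_{11}$, and verify that ``observing $X_1$'' really only reveals a $k$-dimensional Brownian motion, so that the filter's innovation is genuinely $\R^k$-valued and the pseudo-inverses appearing implicitly in the gain are well defined. The second delicate point is propagating $\Eout(\etil(r)\cond\cf_1(r))=0$ to earlier conditioning $\cf_1(s)$; this needs the fact, established along the way, that the noises driving $\etil$ are $(\cf_1)$-martingales on $[s,\infty)$ — essentially a restatement of (ii), so (ii) and (iii) are proved together with (i) in a single induction-free fixed-point argument. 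Everything else is routine It\^o calculus and matching with the Riccati equation (\ref{eq:riccati}).
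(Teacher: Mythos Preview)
Your proposal is viable and correctly identifies the filter structure, but it takes a different route from the paper. The paper does not attempt a direct It\^o-calculus verification of (i); instead it introduces the reduced $k$-dimensional observation process $Y(t)=\int_0^t\sigma_{11}(s)^\prime\gamma(s)e_2X(s)\,ds+W_1(t)$, checks that $p:=e_2^\prime q_2 e_2$ satisfies the full $n\times n$ correlated-noise Riccati equation, and invokes Theorem~10.3 of \cite{lish1} to obtain that $N(\Xhat(t),p(t))$ is a regular $\cf_1(0)\vee\cf^Y(t)$-conditional distribution for $X(t)$. The remaining work is the $\sigma$-algebra identity $\cf_1(t)=\cf_1(0)\vee\cf^Y(t)$: one inclusion comes from $\Xhat_1=X_1$, and the other uses (H2) to rewrite $Y$ as a stochastic integral against $X_1$ (this is where (H2) is really used, and is the reason the rank-deficiency bookkeeping you worry about causes no trouble). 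Parts (ii) and (iii) then follow in one line each. Your direct approach avoids the external citation but pays with the circularity you flag: computing $d(\etil\etil^\prime)$ and taking an $\cf_1$-conditional expectation already presupposes that $\Wbar_1$ is an $\cf_1$-martingale, while your argument for (ii) presupposes $\Eout(\etil(r)\mid\cf_1(r))=0$. This can be broken---substituting the innovation relation into the observation equation gives $\sigma_{11}\,d\Wbar_1=dX_1-e_1b\Xhat\,dt$, so $(\Xhat,\sigma_{11}\Wbar_1)$ solves an SDE driven by $X_1$ alone, and $\cf_1$-adaptedness follows from strong solutions without (iii); in the Gaussian setting the zero-mean claim then reduces to checking $\Eout(\etil_2(t)X_1(s)^\prime)=0$ for $s\le t$---but your phrase ``induction-free fixed-point argument'' does not name this mechanism. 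The paper's route is shorter precisely because citing \cite{lish1} absorbs this bootstrap.
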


\begin{proof}
We begin by proving part (i) in the special case that $t=0$.  If $l=0$ then $X_1(0)$
is non-random, and the (unconditional) distribution, $N(\mu,e_2^\prime\phi e_2)$, is also
a regular $X_1(0)$-conditional distribution.  On the other hand, if $l>0$ then
$\Xi_1 = (\psi_{11}^\prime\psi_{11})^{-1}\psi_{11}^\prime(X_1(0)-\mu_1)$ is
$\cf_1(0)$-measurable, and $\Xi_2$ is independent of $\cf_1(0)$.  That
$N(\Xhat_2(0),\phi)$ is a regular $X_1(0)$-conditional distribution for $X_2(0)$
follows from the fact that $X_2(0)=\mu_2+\psi_{21}\Xi_1+\Xi_2$.

Let $p=e_2^\prime q_2e_2$; since $b_{12}=a_{11}\gamma$,
\begin{eqnarray*}
\dot{p} 
  & = & (b-\rho\sigma_{11}^\prime\gamma e_2)p
        + p(b-\rho\sigma_{11}^\prime\gamma e_2)^\prime + a - \rho\rho^\prime
          - p e_2^\prime\gamma^\prime a_{11}\gamma e_2p \\
  & = & bp + pb^\prime + a
        - \big(\rho+p e_2^\prime\gamma^\prime\sigma_{11}\big)
        \big(\rho+p e_2^\prime\gamma^\prime\sigma_{11}\big)^\prime,
\end{eqnarray*}
where $\rho=\sigma\begin{bmatrix} I_k & 0\end{bmatrix}^\prime$.
Let $Y$ be the following $k$-vector ``observations'' process,
\[
Y(t) = \int_0^t \sigma_{11}(s)^\prime\gamma(s)e_2X(s)\,ds + W_1(t),
\]
and let $\cf^Y$ be the filtration it generates.  Now $e_2^\prime q_2=p e_2^\prime$,
and so the diffusion coefficient in the second equation in (\ref{eq:filter}) is
$\rho+p e_2^\prime\gamma^\prime\sigma_{11}$. It is a standard result of Kalman-Bucy
filtering with correlated noise processes (see, for example, Theorem 10.3 in \cite{lish1})
that $N(\Xhat(t),p(t))$ is a regular $\cf_1(0)\vee\cf^Y(t)$-conditional distribution
for $X(t)$.  Since $\Xhat_1=X_1$, $\cf_1(t)\subset\cf_1(0)\vee\cf^Y(t)$ for all $t$.
It follows from (H2) that
\[
Y(t) = \int_0^t \big(\sigma_{11}(s)^\prime\sigma_{11}(s)\big)^{-1}\sigma_{11}(s)^\prime 
           (dX_1(s)-b_{11}(s)X_1(s)ds),
\]
and since $(X_1,\cf_1)$ is a semimartingale (Stricker's theorem),
$\cf^Y(t)\subset\cf_1(t)$.  This completes the proof of part (i).
 
The fact that $(\Wbar_1,\cf_1)$ is a martingale with quadratic covariation
$[\Wbar_1](t)=I_kt$ is easily established, and this proves part (ii).  Part (iii) follows
since the second equation in (\ref{eq:filter}) has a strong solution.
\end{proof}

\subsection{The Transfer Entropy} \label{se:transent}

For any continuous, vector-valued process $(\Theta(t)\in\R^m,\,0\le t<\infty)$ and any
$0\le s< t<\infty$, we use the notation $\Theta(s,t)$ for the $C([s,t];\R^m)$-valued
random variable $(\Theta(r),\,s\le r\le t)$.  (The space $C([s,t];\R^m)$ is assumed to
be metrised by the maximum norm.)  We define the transfer entropy from $X_2$ to $X_1$
as follows:  
\begin{equation}
T_{2\rightarrow 1}(s,t)
  = I(X_2(0,s);X_1(s,t)\cond\cf_1(s))\quad\text{for }0\le s\le t<\infty. \label{eq:transent}
\end{equation}

\begin{proposition} \label{pr:transent}
If (H1) and (H2) hold then, for any $0\le s\le t <\infty$,
\begin{equation}
T_{2\rightarrow 1}(s,t)
   = \half\int_s^t\tr\big(\gamma(r)^\prime a_{11}(r)\gamma(r)(q_2(r)-\qtil_2(r))\big)\,dr,
     \label{eq:T21val}
\end{equation}
where $\gamma$ and $q_2$ are as defined in (H2) and (\ref{eq:riccati}), respectively,
$(\qtil_2(r),\,s\le r<\infty)$ satisfies (\ref{eq:riccati}) over the time interval
$[s,\infty)$ and $\qtil_2(s)=0$.
\end{proposition}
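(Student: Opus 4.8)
The plan is to reduce $T_{2\rightarrow1}(s,t)$ to a conditional mutual information between the single random vector $X_2(s)$ and an observation path driven by additive unit white noise, and then to evaluate it by a Girsanov computation followed by an orthogonality identity for filtering errors. On $[s,t]$, (H2) together with the identity $a_{11}=\sigma_{11}\sigma_{11}^\prime$ (which follows from $a=\sigma a^W\sigma^\prime$) turns the $X_1$-component of (\ref{eq:Xrep}) into $dX_1(r)=b_{11}(r)X_1(r)\,dr+\sigma_{11}(r)\,dY(r)$, where $Y$ is the $k$-vector process $dY(r)=\sigma_{11}(r)^\prime\gamma(r)X_2(r)\,dr+dW_1(r)$ appearing in the proof of Proposition \ref{pr:filters}. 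As this linear equation is pathwise unique and $Y(r)-Y(s)=\int_s^r(\sigma_{11}(u)^\prime\sigma_{11}(u))^{-1}\sigma_{11}(u)^\prime(dX_1(u)-b_{11}(u)X_1(u)\,du)$, we get $\sigma(X_1(s,t))\vee\cf_1(s)=\sigma(Y(r)-Y(s),\,s\le r\le t)\vee\cf_1(s)$, and the chain rule gives $T_{2\rightarrow1}(s,t)=I(X_2(0,s);(Y(r)-Y(s),\,s\le r\le t)\cond\cf_1(s))$. Moreover, on $[s,\infty)$ the process $X$ solves the pathwise-unique linear equation $X(r)=X(s)+\int_s^r b(u)X(u)\,du+(M(r)-M(s))$ driven by the $\cf(s)$-independent increment $M(r)-M(s)$, so $(X(r),\,r\ge s)$ is $\sigma(X(s))$-conditionally independent of $\cf(s)$; hence $X_1(s,t)$ --- equivalently the $Y$-increments just described --- is conditionally independent of $X_2(0,s)$ given $\cf_1(s)\vee\sigma(X_2(s))$, and a second application of the chain rule collapses the first argument to give
\[
T_{2\rightarrow1}(s,t)=I\big(X_2(s);(Y(r)-Y(s),\,s\le r\le t)\cond\cf_1(s)\big).
\]

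To evaluate this I would use the disintegration identity $I(U;V\cond\ch)=\Eout\,\cd(P_{V\cond\ch\vee\sigma(U)}\cond P_{V\cond\ch})$ (see \cite{pins1}) with $U=X_2(s)$, $V$ the $Y$-increment path, and $\ch=\cf_1(s)$. Conditionally on $\cf_1(s)$, Proposition \ref{pr:filters} yields the innovations representation $dY(r)=\sigma_{11}(r)^\prime\gamma(r)\Xhat_2(r)\,dr+d\Wbar_1(r)$ with $\Wbar_1$ an $\cf_1$-Brownian motion, where $\Xhat_2:=e_2\Xhat=\Eout[X_2\cond\cf_1]$. Conditionally on $\cf_1(s)\vee\sigma(X_2(s))$, the state $X(s)$ is known exactly; the Markov property lets one discard the remainder of $\cf_1(s)$, and re-running the construction of Proposition \ref{pr:filters} over $[s,\infty)$ from this exact initial state shows that $Y$ then acquires the drift $\sigma_{11}(r)^\prime\gamma(r)\Xtil_2(r)$, where $\Xtil_2(r)=\Eout[X_2(r)\cond\cf_1(r)\vee\sigma(X_2(s))]$ and the associated error covariance is precisely the $\qtil_2$ of the statement --- the Riccati equation (\ref{eq:riccati}) reappears, now started from $\qtil_2(s)=0$. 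By Girsanov's theorem (the Duncan-type formula; cf.\ \cite{dunc1}, \cite{maza1}), whose integrability hypotheses hold because $\|\gamma^\prime a_{11}\gamma\|\le\|\gamma^\prime(I_{n_1}+a_{11})\gamma\|$ is integrable by (\ref{eq:gambnd}) and $X$ has locally bounded second moments, the inner divergence equals $\half\Eout\big[\int_s^t\|\sigma_{11}(r)^\prime\gamma(r)(\Xtil_2(r)-\Xhat_2(r))\|^2\,dr\,\big|\,\cf_1(s)\vee\sigma(X_2(s))\big]$, so taking the outer expectation,
\[
T_{2\rightarrow1}(s,t)=\half\int_s^t\Eout\big\|\sigma_{11}(r)^\prime\gamma(r)\big(\Xtil_2(r)-\Xhat_2(r)\big)\big\|^2\,dr.
\]

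It remains to obtain the trace form. Decompose the coarse-filter error as $X_2(r)-\Xhat_2(r)=(X_2(r)-\Xtil_2(r))+(\Xtil_2(r)-\Xhat_2(r))$: the first summand is uncorrelated with every $(\cf_1(r)\vee\sigma(X_2(s)))$-measurable variable, while the second is one such variable, so the two are uncorrelated; taking outer products and expectations, and using that the two filtering-error covariances are the deterministic matrices $q_2(r)$ and $\qtil_2(r)$, gives $\Eout[(\Xtil_2(r)-\Xhat_2(r))(\Xtil_2(r)-\Xhat_2(r))^\prime]=q_2(r)-\qtil_2(r)$. Substituting into the last display, writing the squared norm as a trace, and using $\sigma_{11}\sigma_{11}^\prime=a_{11}$ together with the cyclic property of the trace, produces (\ref{eq:T21val}). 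I expect the main obstacle to be the Girsanov step: one must rigorously identify the two conditional laws of the observation path with the two innovations representations --- which requires justifying the Markov reduction of the past to $X(s)$ and verifying, as in the proof of Proposition \ref{pr:filters}, that the Kalman--Bucy filter may be restarted at time $s$ --- and must confirm that the relevant Radon--Nikodym derivative is a genuine probability density under the stated integrability.
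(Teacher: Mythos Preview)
Your argument is sound and reaches the result, but it is organised differently from the paper's proof. The paper avoids working directly with the conditional mutual information: it augments the first component to $Z_1=(X_1(\cdot),X_2(s))$ on $[s,\infty)$, applies Proposition~\ref{pr:filters} to the pair $(Z_1,Z_2)$ to produce a second filter $\Zhat$ with error covariance $\qtil_2$ and innovations $\Wbar_1^Z$, and then shows via the chain rule and Proposition~\ref{pr:filters}(ii),(iii) that $T_{2\rightarrow1}(s,t)$ equals the \emph{unconditional} mutual information $I(X(0,s);\Wbar_1^+(s,t))$, where $\Wbar_1^+=\Wbar_1-\Wbar_1(s)$. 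That last quantity is evaluated by a single invocation of Theorem~3.1 in \cite{maza2} applied to the linear system satisfied by $\Theta=\Zhat_2-\Xhat_2$ and $\Wbar_1^+$; the identity $\Eout\Theta\Theta^\prime=q_2-\qtil_2$ drops out of the same system. You instead keep the conditioning on $\cf_1(s)$ explicit, reduce the first argument to $X_2(s)$ by Markov, write $I(U;V\cond\ch)=\Eout\cd(P_{V\cond\ch\vee\sigma(U)}\cond P_{V\cond\ch})$, and compute the divergence by a direct Girsanov comparison of the two innovations representations of $Y$ on $[s,t]$---the one driven by $\Xhat_2$ with covariance $q_2$, and the one driven by the restarted filter $\Xtil_2$ with covariance $\qtil_2$---finishing with the orthogonality $\Eout(\Xtil_2-\Xhat_2)(\Xtil_2-\Xhat_2)^\prime=q_2-\qtil_2$. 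The paper's route hides the Girsanov bookkeeping inside \cite{maza2} and never needs to restart a filter or to check that $\Xhat_2(r)$, $\Xtil_2(r)$ are functionals of $(\cf_1(s),X_2(s))$ and the observation path; your route is more self-contained but, as you note, requires you to verify these measurability facts and the true-martingale property of the exponential, which follow here from linearity, Gaussianity, and the bound (\ref{eq:gambnd}).
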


\begin{proof}
Fix $0\le s<\infty$, let $(Z(t)\in\R^{n+n_2},\, s\le t<\infty)$ be the process with
components,
\[
Z_1(t) = \begin{bmatrix}
         X_1(t) \\ X_2(s)
         \end{bmatrix}\quad\text{and}\quad
Z_2(t) = X_2(t),
\]
and let $(\cf_1^Z(t),s\le t<\infty)$ be the filtration generated by $Z_1$.  Then $Z$
satisfies the It\^{o} equation,
\[
Z(t) = Z(s) + \int_s^t e^\prime b(r)e Z(r)dr + \int_s^t e^\prime\sigma(r)dW(r),
\]
where $e=\begin{bmatrix}e_1^\prime & 0 & e_2^\prime\end{bmatrix}\in\M_{n,n+n_2}$. Let
$((\Zhat,\Wbar_1^Z)(t)\in\R^{n+n_2+k},\ s\le t<\infty)$ satisfy the It\^{o} equation
\begin{eqnarray*}
\Zhat(s)
  & = & Z(s),\quad \Wbar_1^Z(s) = 0 \\
d\Zhat
  & = & e^\prime be\Zhat dt
        + e^\prime\left(\rho+e_2^\prime\qtil_2\gamma^\prime\sigma_{11}\right)d\Wbar_1^Z, \\
d\Wbar_1^Z
  & = & \sigma_{11}^\prime\gamma e_2e(Z-\Zhat)dt + dW_1.
\end{eqnarray*}
It is easily verified that $Z$ satisfies the hypotheses of Proposition \ref{pr:filters},
with $s$ playing the role of time 0, $n$ playing the role of $n_1$, and $n+n_2$ playing
the role of $n$. Proposition \ref{pr:filters} shows that
$N(\Zhat(t),e^\prime e_2^\prime\qtil_2(t)e_2e)$ is a regular $\cf_1^Z(t)$-conditional
distribution for $Z(t)$, and that $(\Wbar_1^Z,\cf_1^Z)$ is a $k$-dimensional standard
Brownian motion.  (In particular, since $X$ is Markov with respect to $\cf$, $\Wbar_1^Z$
is independent of $\cf(s)$.)

Let $\bbar=b_{22}-(\sigma_{21}+\qtil_2 \gamma^\prime\sigma_{11})\sigma_{11}^\prime\gamma$
and $\sigmabar=(\qtil_2-q_2)\gamma^\prime\sigma_{11}$. It follows from (\ref{eq:coefbnd})
and (\ref{eq:gambnd}), and the continuity of $q_2$ and $\qtil_2$, that
$\int_0^t(\|\bbar(s)\|+\|\sigmabar(s)\|^2)ds<\infty$ for all $0\le t<\infty$.  Let
$\Theta(t)=\Zhat_2(t)-\Xhat_2(t)$ and $\Wbar_1^+(t)=\Wbar_1(t)-\Wbar_1(s)$; then
$(\Theta,\Wbar_1^+)$ satisfies the equations
\begin{eqnarray}
\Theta(s) & = & X_2(s)-\Xhat_2(s),\quad \Wbar_1^+(s) = 0 \nonumber \\
d\Theta
  & = & \bbar\Theta dt + \sigmabar d\Wbar_1^+, \label{eq:Thetadef} \\
d\Wbar_1^+
  & = & \sigma_{11}^\prime\gamma\Theta dt + d\Wbar_1^Z,
        \quad\, . \nonumber
\end{eqnarray}
It is easily verified that $\Eout(\Theta(t)\cond\cf_1(t))=0$, and
$\Eout(\Theta(t)\Theta(t)^\prime\cond\cf_1(t))=q_2(t)-\qtil_2(t)$.  Furthermore,
(\ref{eq:Thetadef}) has a strong solution, and so Theorem 3.1 in \cite{maza2} shows that
\begin{equation}
I(X(0,s);\Wbar_1^+(s,t))
  = \half\int_s^t\tr\left(\gamma(r)^\prime a_{11}(r)\gamma(r)(q_2(r)-\qtil_2(r))\right)dr.
    \label{eq:I21val}
\end{equation}
(See also \cite{kazz1} for the case in which $k=1$.)  Now
\begin{eqnarray}
I(X(0,s);\Wbar_1^+(s,t))
  & = & I(X(0,s);\Wbar_1^+(s,t)) - I(X_1(0,s);\Wbar_1^+(s,t)) \nonumber \\
  & = & I(X_2(0,s);\Wbar_1^+(s,t)\cond\cf_1(s)) \nonumber \\
        [-1.5ex] \label{eq:xwbplus} \\ [-1.5ex]
  & = & I(X_2(0,s);(X_1(s),\Wbar_1^+(s,t))\cond\cf_1(s)) \nonumber \\
  & = & T_{2\rightarrow 1}(s,t), \nonumber
\end{eqnarray}
where we have used Proposition \ref{pr:filters}(ii) and the fact that the mutual
information between independent random variables is zero in the first step, the
chain rule in the second step, and Proposition \ref{pr:filters}(iii) and the invariance
of conditional mutual information under measurable isomorphisms in the final step.  The
statement of the proposition now follows from (\ref{eq:I21val}) and (\ref{eq:xwbplus}).
\end{proof}

\begin{remark} \label{eq:notmark}
Since $X$ is Markov, $T_{2\rightarrow 1}(s,t)=I(X_2(s);X_1(s,t)|\cf_1(s))$,
and so the history of $X_2$ is unimportant in (\ref{eq:transent}).
The transfer entropy for different histories of $X_1$ can be obtained by re-defining
the time origin at which the Riccati equation (\ref{eq:riccati}) is initialised.
\end{remark}

Substituting $W_1$ from the third equation of (\ref{eq:filter}) into (\ref{eq:Xrep}),
we can express $X_2$ in the form $X_2=X_2^1+X_2^2$, where
\begin{eqnarray}
X_2^1(0) & = & \mu_2 + \psi_{21}\Xi_1,\quad X_2^2(0) = \Xi_2 \nonumber\\
dX_2^1
  & = & (b_{22}-a_{21}\gamma)X_2^1dt + b_{21}X_1dt + a_{21}\gamma\Xhat_2dt
        + \sigma_{21}d\Wbar_1  \label{eq:x2comp} \\
dX_2^2
  & = & (b_{22}-a_{21}\gamma)X_2^2dt + dW_2, \nonumber
\end{eqnarray}
and $\psi_{21}\Xi_1$ is void if $l=0$.
Since these equations have strong solutions, the components $X_2^1$ and $X_2^2$ are
adapted to the filtrations generated by $X_1$ and $(\Xi_2,W_2)$, respectively.  (This is
perhaps surprising since, according to (\ref{eq:Xrep}), $W_1$ ``drives'' $X_2$ and is
typically not adapted to $\cf_1$.)  It follows that
\begin{equation}
T_{2\rightarrow 1}(s,t) = I((\Xi_2,W_2(0,s));X_1(s,t)\cond\cf_1(s)); \label{eq:transour}
\end{equation}
in particular, no part of $T_{2\rightarrow 1}$ has its origins in the fact that $X_2$
shares the noise component $W_1$ with $X_1$ if $a_{12}\neq 0$.

\subsection{The Directed Information} \label{se:dirinf}

Let $D_{2\rightarrow 1}(t)$ be the following {\em directed information}:
\begin{equation}
D_{2\rightarrow 1}(t) = \int_0^t R_{2\rightarrow 1}(s)\,ds,\quad\text{where}\quad
R_{2\rightarrow 1}(t)
  = \lim_{\delta\downarrow 0}\delta^{-1}T_{2\rightarrow 1}(t,t+\delta). \label{eq:r21val}
\end{equation}
It is natural to think of $R_{2\rightarrow 1}$ as representing a {\em flow} of Shannon
information from $X_2$ to $X_1$.  The following proposition develops this idea, showing
that it is the rate at which $X_1$ learns about the process $(\Xi_2,W_2)$. 
  
\begin{proposition} \label{pr:mutinf}
If (H1) and (H2) hold then, for any $0\le t<\infty$,
\begin{eqnarray}
D_{2\rightarrow 1}(t)
  & = & I((\Xi_2, W_2(0,t)); X_1(0,t)) \nonumber \\
        [-1.5ex] \label{eq:intr21} \\ [-1.5ex]
  & = & \half\int_0^t \tr\big(\gamma(s)^\prime a_{11}(s)\gamma(s)q_2(s)\big)\,ds.
        \nonumber
\end{eqnarray}
\end{proposition}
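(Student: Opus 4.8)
The plan is to prove both equalities in (\ref{eq:intr21}) by showing that $D_{2\rightarrow 1}(t)$ and $I((\Xi_2,W_2(0,t));X_1(0,t))$ each equal $\half\int_0^t\tr(\gamma^\prime a_{11}\gamma\,q_2)\,ds$.  For the directed information this is obtained by differentiating the transfer-entropy formula of Proposition \ref{pr:transent}; for the path mutual information it is a Duncan-type computation running parallel to the proof of Proposition \ref{pr:transent}, but with the filter (\ref{eq:filter}) started at time $0$ rather than at a shifted time.

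First I would compute $D_{2\rightarrow 1}$.  By Proposition \ref{pr:transent} applied on $[t,t+\delta]$,
\[
\delta^{-1}T_{2\rightarrow 1}(t,t+\delta)
  = \frac{1}{2\delta}\int_t^{t+\delta}\tr\big(\gamma^\prime a_{11}\gamma\,q_2\big)\,dr
    - \frac{1}{2\delta}\int_t^{t+\delta}\tr\big(\gamma^\prime a_{11}\gamma\,\qtil_2\big)\,dr ,
\]
where $\qtil_2$ solves (\ref{eq:riccati}) on $[t,\infty)$ with $\qtil_2(t)=0$.  Under (H2) the function $\tr(\gamma^\prime a_{11}\gamma\,q_2)$ is locally integrable ($\gamma^\prime a_{11}\gamma$ being so by (H2), and $q_2$ continuous), so by the Lebesgue differentiation theorem the first term tends to $\half\tr(\gamma(t)^\prime a_{11}(t)\gamma(t)q_2(t))$ for almost every $t$.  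The second term is at most $\big(\sup_{[t,t+\delta]}\|\qtil_2\|\big)\cdot\delta^{-1}\int_t^{t+\delta}\|\gamma^\prime a_{11}\gamma\|\,dr$ in absolute value; the supremum tends to $\|\qtil_2(t)\|=0$ by continuity of $\qtil_2$ (which follows from the integral form of (\ref{eq:riccati}) with (\ref{eq:coefbnd}) and (\ref{eq:gambnd})), while the average stays bounded at almost every $t$, so this term vanishes.  Hence $R_{2\rightarrow 1}(t)=\half\tr(\gamma(t)^\prime a_{11}(t)\gamma(t)q_2(t))$ for almost every $t$, and (\ref{eq:r21val}) gives the second equality in (\ref{eq:intr21}).

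Next I would compute the path mutual information.  By (\ref{eq:x2comp}), $X_2^2$ solves a closed linear equation driven by $W_2$ with initial value $\Xi_2$, and conversely $\Xi_2=X_2^2(0)$ while $W_2$ is recovered by inverting that equation; hence $X_2^2(0,t)$ and $(\Xi_2,W_2(0,t))$ generate the same $\sigma$-algebra, so $I((\Xi_2,W_2(0,t));X_1(0,t))=I(X_2^2(0,t);X_1(0,t))$ by the invariance of mutual information under measurable isomorphisms.  Since $W$ is independent of $X(0)$ and its blocks $W_1,W_2$ are independent (their quadratic covariation being block diagonal), $W_1$ is independent of $(\Xi_2,W_2)$, and $X_2^2(0,t)$ is independent of $\cf_1(0)$.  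Replacing $X_1(0,t)$ by $(X_1(0),\Wbar_1(0,t))$ — which generates the same $\sigma$-algebra, by Proposition \ref{pr:filters}(iii) — and applying the chain rule (\ref{eq:chain}) together with this independence gives $I((\Xi_2,W_2(0,t));X_1(0,t))=I(X_2^2(0,t);\Wbar_1(0,t)\cond\cf_1(0))$.  The last equation in (\ref{eq:filter}) reads $d\Wbar_1=\sigma_{11}^\prime\gamma\,(X_2-\Xhat_2)\,dt+dW_1$; since $\Xhat$ is the $\cf_1$-conditional mean of $X$ (Proposition \ref{pr:filters}(i)) and $X_2^1$ is $\cf_1$-adapted, $X_2-\Xhat_2=X_2^2-\Eout[X_2^2\cond\cf_1]$, so $\Wbar_1$ is the innovations process for the signal $X_2^2$ observed in additive standard white noise $W_1$ (in feedback form, the drift depending on the observations through $\Eout[X_2^2\cond\cf_1]=\Xhat_2-X_2^1$).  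Theorem 3.1 in \cite{maza2}, applied conditionally on $\cf_1(0)$ as in the proof of Proposition \ref{pr:transent}, then identifies $I(X_2^2(0,t);\Wbar_1(0,t)\cond\cf_1(0))$ with
\[
\half\int_0^t\Eout\big\|\sigma_{11}(s)^\prime\gamma(s)\big(X_2(s)-\Xhat_2(s)\big)\big\|^2\,ds ,
\]
and Proposition \ref{pr:filters}(i), which gives $\Eout[(X_2-\Xhat_2)(X_2-\Xhat_2)^\prime\cond\cf_1]=q_2$, reduces this to $\half\int_0^t\tr(\gamma^\prime a_{11}\gamma\,q_2)\,ds$ (using $\sigma_{11}\sigma_{11}^\prime=a_{11}$ and the cyclic invariance of the trace).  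Together with the value found for $D_{2\rightarrow 1}$, this gives (\ref{eq:intr21}).

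I expect the main obstacle to be the limit in the first step: because the coefficients are only assumed locally integrable, $R_{2\rightarrow 1}(t)$ can be identified with the stated integrand only for almost every $t$ — which is all (\ref{eq:intr21}) needs — so the differentiation must be carried out at Lebesgue points, with the behaviour of $\qtil_2$ near its initial time controlled via the Riccati equation.  The second step should be routine, re-running the proof of Proposition \ref{pr:transent}, the only new ingredients being the identification of the relevant signal as $X_2^2\sim(\Xi_2,W_2)$ (already apparent in (\ref{eq:x2comp}) and (\ref{eq:transour})) and the use of the ``from time $0$'' filter (\ref{eq:filter}).
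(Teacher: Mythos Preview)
Your proposal is correct and follows essentially the same route as the paper: reduce $I((\Xi_2,W_2(0,t));X_1(0,t))$ to a mutual information with the innovations $\Wbar_1$ via Proposition~\ref{pr:filters}(iii) and the chain rule, then invoke Theorem~3.1 of \cite{maza2} to obtain the integral $\half\int_0^t\tr(\gamma^\prime a_{11}\gamma\,q_2)\,ds$. The paper works directly with the error process $\Phi=X_2-\Xhat_2$ (rather than passing through $X_2^2$) and drops the conditioning on $\cf_1(0)$ using the independence of $(\Xi_2,W_2,\Wbar_1)$ from $\Xi_1$, and it leaves the identity $D_{2\rightarrow 1}(t)=\half\int_0^t\tr(\gamma^\prime a_{11}\gamma\,q_2)\,ds$ implicit---your explicit Lebesgue-differentiation argument for $R_{2\rightarrow 1}$ supplies a detail the paper does not spell out.
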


\begin{proof}
Let $\bbar=b_{22}-a_{21}\gamma$ and $\sigmabar=-q_2\gamma^\prime\sigma_{11}$. It
follows from (\ref{eq:coefbnd}) and (\ref{eq:gambnd}), and the continuity of $q_2$,
that $\int_0^t(\|\bbar(s)\|+\|\sigmabar(s)\|^2)ds < \infty$ for all $0\le t<\infty$.
Let $\Phi(t)=X_2(t)-\Xhat_2(t)$; then $(\Phi,\Wbar_1)$ satisfies the equations
\begin{eqnarray}
\Phi(0) & = & \Xi_2,\quad \Wbar_1(0) = 0 \nonumber \\
d\Phi
  & = & \bbar\Phi dt + \sigmabar d\Wbar_1 + dW_2 \label{eq:Phidef} \\
d\Wbar_1
  & = & \sigma_{11}^\prime\gamma\Phi dt + dW_1. \nonumber
\end{eqnarray}
Now (\ref{eq:Phidef}) has a strong solution, and so $(\Xi_2,W_2(0,t),\Wbar_1(0,t))$ is
independent of $\Xi_1$.  It thus follows from Proposition \ref{pr:filters}(iii) and
the chain rule (\ref{eq:chain}) that
\begin{eqnarray*}
I((\Xi_2,W_2(0,t));X_1(0,t))
  & = & I((\Xi_2,W_2(0,t));\Wbar_1(0,t)) \\
  & = & \half\int_0^t\tr(\gamma(s)^\prime a_{11}(s)\gamma(s)q_2(s))\,ds,
\end{eqnarray*}
where we have used Theorem 3.1 of \cite{maza2} in the second step.
\end{proof}

There is an important difference between $D_{2\rightarrow 1}$ and the obvious
continuous-time extension of Massey's definition (\ref{eq:massdi}).  From the latter
we might propose the definition
$D_{2\rightarrow 1}^M(t)=\int_0^t R_{2\rightarrow 1}^M(s)ds$, where
\begin{equation}
R_{2\rightarrow 1}^M(t)
  = \lim_{\delta\downarrow 0}\delta^{-1}I(X_2(0,t+\delta);X_1(t,t+\delta)\cond\cf_1(t));
    \label{eq:tildef}
\end{equation}
cf.~(32) in \cite{wekp1}.  However, as pointed out in the introduction, this is truly
directional only if there is a physical time delay between $X_2(t)$ and $X_1(t)$.  This
is not so here since $X_1(t)$ can influence $X_2(t+\delta)$, through the coefficient
$b_{21}$, for arbitrarily small $\delta>0$.  The mutual information in (\ref{eq:tildef})
can be infinite since $X_1(t,t+\delta)$ and $X_2(t,t+\delta)$ typically have
non-zero quadratic covariation, $[X_1,X_2](t)=\int_0^t a_{12}(s)ds$, which results in
the singularity of their joint distribution with respect to its product of marginals.
If $X_1$ and $X_2$ share no noise ($a_{12}=0$) then $R_{2\rightarrow 1}^M$ coincides
with $R_{2\rightarrow 1}$.

\section{Processes that are not Jointly Markov} \label{se:threecomp}

Let $X$ be as defined in section \ref{se:ccgdp}, and satisfy (H1) and (H2).  In this
section we suppose that $n_2\ge 2$, and sub-divide $X_2$ into two components.  For
some $\ntil_2,\ntil_3\ge 1$ with $\ntil_2+\ntil_3=n_2$, let
\begin{equation}
\etil_1 = e_1, \quad
\etil_2 = \begin{bmatrix} I_{\ntil_2} & 0\end{bmatrix}e_2\in\M_{\ntil_2,n}
          \quad\text{and}\quad
\etil_3 = \begin{bmatrix} 0 & I_{\ntil_3}\end{bmatrix}e_2\in\M_{\ntil_3,n},
          \label{eq:eitdef}
\end{equation}
where $e_i$ is as defined in (\ref{eq:eidef}).  We shall use the notation
$\Xtil_i:=\etil_iX$, $\btil_{ij}:=\etil_i b\etil_j^\prime$ and
$\atil_{ij}:=\etil_i a\etil_j^\prime$ ($i,j=1,2,3$) for the block components of $X$, $b$
and $a$.  Proposition \ref{pr:transent} evaluates the transfer entropy from
$(\Xtil_2,\Xtil_3)$ to $\Xtil_1$ ($\Ttil_{(2,3)\rightarrow 1}=T_{2\rightarrow 1}$).
In what follows, we split $\Ttil_{(2,3)\rightarrow 1}$ into separate components
from $\Xtil_2$ and $\Xtil_3$, in two different ways.

\subsection{Splitting by components of $X_2$}

According to the chain rule (\ref{eq:chain}):
\begin{equation}
\Ttil_{(2,3)\rightarrow 1}(s,t) =
T_{2\rightarrow 1}(s,t)
  = \Ttil_{2\rightarrow 1}^X(s,t) + \Ttil_{3\rightarrow 1|2}^X(s,t), \label{eq:Xsplit}
\end{equation}
where
\begin{eqnarray}
\Ttil_{2\rightarrow 1}^X(s,t)
  & = & I(\Xtil_2(0,s);\Xtil_1(s,t)\cond\cf_1(s)), \nonumber \\
        [-1.5ex] \label{eq:Xtecomp} \\ [-1.5ex]
\Ttil_{3\rightarrow 1|2}^X(s,t)
  & = & I(\Xtil_3(0,s);\Xtil_1(s,t)\cond\cf_{12}(s)), \nonumber  
\end{eqnarray}
and $\cf_{12}$ is the filtration generated by $(\Xtil_1,\Xtil_2)$.

Let $\alpha_{ij}:=\etil_ie_2^\prime\alpha e_2\etil_j^\prime$ ($i,j=2,3$), where $\alpha$
is as defined in (\ref{eq:alphadef}).  In order to compute $\Ttil_{2\rightarrow 1}^X$ and
$\Ttil_{3\rightarrow 1|2}^X$, we first factorise $\alpha$ as we did $a$ in section
\ref{se:direct}, making use of the following hypotheses:
\begin{enumerate}
\item[(H3)] for any $0\le t<\infty$, $\rank(\alpha_{22}(t))=\ktil>0$;
\item[(H4)] for any $0\le t<\infty$,
  $\btil_{23}(t) = \atil_{21}(t)\gamma(t)e_2\etil_3^\prime + \alpha_{22}(t)c(t)$, where
  $\gamma$ is as defined in (H2), and $c:[0,\infty)\rightarrow\M_{\ntil_2,\ntil_3}$, is
  a measurable function satisfying
  \begin{equation}
  \int_0^t \|c(s)^\prime (I_{\ntil_2}+\alpha_{22}(s))c(s)\|\,ds < \infty
    \quad\text{for all }0\le t<\infty. \label{eq:gam23bnd}
  \end{equation}
\end{enumerate}
Let $\alpha_{22}(t)=\util(t)\lambdatil(t)\util(t)^\prime$ be the reduced
eigen-decomposition of $\alpha_{22}(t)$; then
$a(t)=\tau(t)a^V(t)\tau(t)^\prime$, where
\begin{equation}
\tau(t)
  = \begin{bmatrix}
     \tau_{11}(t) &       0      &   0    \\
     \tau_{21}(t) & \tau_{22}(t) &   0    \\
     \tau_{31}(t) & \tau_{32}(t) & I_{n_3}
     \end{bmatrix},\quad
a^V(t)
  = \begin{bmatrix}
     I_{k+\ktil} &   0 \\
          0      & \beta(t)
     \end{bmatrix}, \label{eq:tauav}
\end{equation}
$\tau_{11}(t)=\sigma_{11}(t)$, $\tau_{i1}(t)=\etil_ie_2^\prime\sigma_{21}(t)$ and
$\tau_{i2}(t)=\alpha_{i2}(t)\util(t)\lambdatil(t)^{-1/2}$ ($i=2,3$), and
\begin{equation}
\beta(t) = \alpha_{33}(t) - \tau_{32}(t)\tau_{32}(t)^\prime. \label{eq:betdef}
\end{equation}
Similarly, let $\phi_{ij}:=\etil_ie_2^\prime\phi e_2\etil_j^\prime$ ($i,j=2,3$), where
$\phi$ is as defined in (\ref{eq:psidef}), and let $\ltil=\rank(\phi_{22})$.
If $\ltil>0$ then $\phi_{22}$ admits a reduced eigen-decomposition
$\phi_{22}=\util_0\lambdatil_0\util_0^\prime$, and $v=\eta v^\Theta\eta^\prime$,
where
\begin{equation}
\eta = \begin{bmatrix}
        \eta_{11} &     0     &   0    \\
        \eta_{21} & \eta_{22} &   0    \\
        \eta_{31} & \eta_{32} & I_{n_3}
        \end{bmatrix},\quad
v^\Theta = \begin{bmatrix}
           I_{l+\ltil} &   0 \\
              0        & \theta
           \end{bmatrix},\quad
\theta = \phi_{33} - \eta_{32}\eta_{32}^\prime,  \label{eq:thetdef}
\end{equation}
$\eta_{11}=\psi_{11}$, and $\eta_{i1}=\etil_ie_2^\prime\psi_{21}$ and $\eta_{i2}=\phi_{i2}\util_0\lambdatil_0^{-1/2}$ ($i=2,3$).
($\eta_{i1}$ is void if $l=0$, $\eta_{i2}$ is void if $\ltil=0$, and
$v^\Theta=\theta$ if $l+\ltil=0$.)

The foregoing arguments show that $X$ satisfies the It\^{o} equation:
\begin{equation}
X(t) = \mu + \eta\Theta + \int_0^t b(s)X(s)\,ds + \int_0^t\tau(s)\,dV(s), \label{eq:X3rep}
\end{equation}
where
\begin{equation}
\Theta = (\eta^\prime\eta)^{-1}\eta^\prime(X(0)-\mu),\quad
V(t) = \int_0^t(\tau(s)^\prime\tau(s))^{-1}\tau(s)^\prime dM(s), \label{eq:ThetV}
\end{equation}
and $M$ is as defined in (\ref{eq:Mdef}).
$\Theta$ is an $\cf(0)$-measurable $(l+\ltil+\ntil_3)$-vector Gaussian random variable
with mean zero and covariance matrix $v^\Theta$, and $(V,\cf)$ is a
$(k+\ktil+\ntil_3)$-vector, non-standard Brownian motion with quadratic covariation
$[V](t)=\int_0^t a^V(s)\,ds$.  Let
\begin{equation}
\etil_{(12)} = \begin{bmatrix} I_{n_1+\ntil_2} & 0\end{bmatrix}\in\M_{n_1+\ntil_2,n}.
\end{equation}
We shall use the notation $\Xtil_i:=\etil_iX$, $\btil_{ij}:=\etil_ib\etil_j^\prime$ and
$\atil_{ij}:=\etil_ia\etil_j^\prime$ ($i,j=1,2,3,(12)$) for the block components of $X$,
$b$ and $a$.

Let $(q_3(t)\in\SI_{\ntil_3}^+,0\le t<\infty)$ satisfy the following matrix Riccati
equation:
\begin{eqnarray}
q_3(0)
  & = & \theta \nonumber \\
        [-1.5ex] \label{eq:q3def} \\ [-1.5ex]
\dot{q}_3
  & = & (\btil_{33}-\atil_{3(12)}\gammatil)q_3
        + q_3(\btil_{33}-\atil_{3(12)}\gammatil)^\prime
        + \beta - q_3\gammatil^\prime \atil_{(12)(12)}\gammatil q_3,\qquad \nonumber
\end{eqnarray}
where $\theta$ and $\beta$ are as defined in (\ref{eq:thetdef}) and (\ref{eq:betdef}), and
\begin{equation}
\gammatil(t)
  = \begin{bmatrix}
    \gamma(t)e_2\etil_3^\prime-\tau_{11}(t)(\tau_{11}(t)^\prime\tau_{11}(t))^{-1}
    \tau_{21}(t)^\prime c(t) \\
    c(t)
    \end{bmatrix} \in \M_{n_1+\ntil_2,\ntil_3}. \label{eq:gamtil}
\end{equation}

\begin{proposition} \label{pr:transent3}
If (H1)--(H4) hold, then, for any $0\le s\le t<\infty$,
\begin{equation}
\Ttil_{3\rightarrow 1|2}^X(s,t)
  = \half\int_s^t\tr\big(\etil_3e_2^\prime\gamma(r)^\prime
    a_{11}(r)\gamma(r)e_2\etil_3^\prime
    (q_3(r)-\qtil_3(r))\big)dr, \label{eq:TX31val}
\end{equation}
where $(\qtil_3(t), s\le t<\infty)$ satisfies (\ref{eq:q3def}) over the time interval
$[s,\infty)$, and $\qtil_3(s)=0$.  (NB.~$\Ttil_{2\rightarrow 1}^X$ can be found from
(\ref{eq:T21val}), (\ref{eq:Xsplit}) and (\ref{eq:TX31val}).)
\end{proposition}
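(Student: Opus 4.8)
The plan is to reduce Proposition~\ref{pr:transent3} to an application of Proposition~\ref{pr:transent}, exactly as the latter was itself reduced to Proposition~\ref{pr:filters}, by recognising $\Ttil_{3\rightarrow 1|2}^X(s,t)=I(\Xtil_3(0,s);\Xtil_1(s,t)\cond\cf_{12}(s))$ as a transfer entropy \emph{of a suitable process, conditioned on an enlarged observation filtration}. The key observation is that the pair $(\Xtil_1,\Xtil_2)=\etil_{(12)}X$ should now play the role that $X_1$ played before: we want to compute the disambiguation of the future of $\Xtil_1$ afforded by the past of $\Xtil_3$, over and above the past of $(\Xtil_1,\Xtil_2)$. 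So first I would set up a Kalman--Bucy filter for $X$ based on observations of $(\Xtil_1,\Xtil_2)$, in analogy with (\ref{eq:filter}). The innovations process for this filter lives in $\R^{k+\ktil}$ (the combined ``noise dimension'' of $a_{11}$ and of $\alpha_{22}$, cf.~(H1) and (H3)), and the relevant conditional covariance is the solution of a Riccati equation of the same type as (\ref{eq:riccati}) but driven by the coefficients $\btil_{(12)(12)}$, $\atil_{(12)(12)}$, and the ``directed'' matrix $\gammatil$ of (\ref{eq:gamtil}); this is precisely (\ref{eq:q3def}).

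Concretely, I would use the representation (\ref{eq:X3rep})--(\ref{eq:ThetV}), which factorises $a=\tau a^V\tau^\prime$ and $v=\eta v^\Theta\eta^\prime$ so as to expose, in turn, the noise driving $\Xtil_1$ and then the \emph{additional} noise driving $\Xtil_2$ (through $\alpha_{22}=\util\lambdatil\util^\prime$). The hypothesis (H4) is the analogue of (H2) at this second level: it says $\btil_{23}=\atil_{21}\gamma e_2\etil_3^\prime+\alpha_{22}c$, i.e.\ the cross-drift from $\Xtil_3$ into $\Xtil_2$ factors through the diffusion of $\Xtil_2$ in the appropriate sense, which is exactly the structural condition that makes the directed representation / filtering argument go through with the combined observation $(\Xtil_1,\Xtil_2)$. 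Having set this up, I would verify — just as in the proof of Proposition~\ref{pr:transent}, taking $Z=\big((\Xtil_1,\Xtil_2)(t),\Xtil_3(s)\big)$ augmented by $\Xtil_3(t)$, with $s$ playing the role of time~$0$ and $n_1+\ntil_2$ playing the role of $n_1$ — that this augmented process again satisfies the hypotheses of Proposition~\ref{pr:filters}. That proposition then identifies the $\cf_{12}^Z(t)$-conditional distribution of the augmented state as Gaussian with covariance built from $\qtil_3$ (the version of $q_3$ started at $0$ at time $s$), and produces a standard $(k+\ktil)$-dimensional innovations Brownian motion whose increments regenerate $\cf_{12}$.

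The remaining steps are then formal. Using Proposition~\ref{pr:filters}(ii)--(iii), the invariance of conditional mutual information under the measurable isomorphism relating $\cf_{12}(s,t)$ to $\cf_{12}(s)$-plus-innovations, and the chain rule (\ref{eq:chain}) to strip off the $\cf_{12}(s)$-measurable part exactly as in (\ref{eq:xwbplus}), I would rewrite $\Ttil_{3\rightarrow 1|2}^X(s,t)$ as the mutual information between $X(0,s)$ (equivalently, the $\Xtil_3$-specific initial and noise components) and the innovations increments on $[s,t]$. Theorem~3.1 of \cite{maza2} then evaluates this as $\half\int_s^t\tr\big(G(r)^\prime G(r)(q_3(r)-\qtil_3(r))\big)dr$ for the appropriate diffusion coefficient $G$; the bookkeeping of (\ref{eq:tauav})--(\ref{eq:gamtil}) shows that $G^\prime G$ collapses to $\etil_3 e_2^\prime\gamma^\prime a_{11}\gamma e_2\etil_3^\prime$, giving (\ref{eq:TX31val}). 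I expect the main obstacle to be purely algebraic: checking that the block-matrix manipulations — the two-stage factorisations $a=\tau a^V\tau^\prime$ and $v=\eta v^\Theta\eta^\prime$, together with the definition (\ref{eq:gamtil}) of $\gammatil$ — really do make the second-level filter coefficients line up so that the Riccati equation for the conditional covariance is \emph{exactly} (\ref{eq:q3def}) and the noise-gain factor reduces to the clean form in (\ref{eq:TX31val}). This is the step (as in the computation of $\dot p$ inside the proof of Proposition~\ref{pr:filters}) where one must be careful, but it is routine once the structural hypotheses (H3)--(H4) are in hand.
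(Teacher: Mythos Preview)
Your plan contains a genuine gap, and it is precisely at the step you flagged as ``purely algebraic''. The quantity
\[
\Ttil_{3\rightarrow 1|2}^X(s,t)=I(\Xtil_3(0,s);\Xtil_1(s,t)\cond\cf_{12}(s))
\]
is \emph{asymmetric} in a way that the two-component machinery of Proposition~\ref{pr:transent} does not accommodate directly: you condition on the past of $(\Xtil_1,\Xtil_2)$ but look only at the future of $\Xtil_1$. If you run the augmented-$Z$ argument with $\Xtil_{(12)}$ playing the role of $X_1$ and $\Xtil_3$ the role of $X_2$, the innovations increments $\Vbar_{(12)}^+(s,t)$ regenerate $\cf_{12}(t)$ over $\cf_{12}(s)$, so what you actually obtain is
\[
I(\Xtil_3(0,s);\Xtil_{(12)}(s,t)\cond\cf_{12}(s))
   =\half\int_s^t\tr\big(\gammatil(r)^\prime\atil_{(12)(12)}(r)\gammatil(r)(q_3(r)-\qtil_3(r))\big)\,dr.
\]
A short block computation using $\etil_{(12)}\rhotil=\begin{bmatrix}\tau_{11}&0\\\tau_{21}&\tau_{22}\end{bmatrix}$ and (\ref{eq:gamtil}) gives
\[
\gammatil^\prime\atil_{(12)(12)}\gammatil
   = \etil_3e_2^\prime\gamma^\prime a_{11}\gamma e_2\etil_3^\prime
     + c^\prime\alpha_{22}\,c,
\]
so the ``collapse'' you anticipate fails whenever $c\ne 0$. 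The extra term $c^\prime\alpha_{22}c$ is exactly the information about $\Xtil_3(0,s)$ carried by the \emph{future} of $\Xtil_2$ (through the drift coupling $\btil_{23}$), i.e.\ the term $I(\Xtil_3(0,s);\Xtil_2(s,t)\cond\cf_{12}(s)\vee\sigma(\Xtil_1(s,t)))$ that the chain rule produces when you split $\Xtil_{(12)}(s,t)$ into $\Xtil_1(s,t)$ and $\Xtil_2(s,t)$.

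The paper circumvents this asymmetry by an indirect route. It does build the $(\Xtil_1,\Xtil_2)$-filter as you do, obtaining the $\cf_{12}$-adapted process $\Xbar$ of (\ref{eq:filter3}); but instead of applying Proposition~\ref{pr:transent} with $\Xtil_{(12)}$ as the observed component, it applies it to $\Xbar$ with $\Xtil_1$ as the observed component and $(\Xtil_2,\Xbar_3)$ as the hidden one. Because $\Xbar_3$ is $\cf_{12}$-adapted, the resulting transfer entropy equals $\Ttil_{2\rightarrow 1}^X(s,t)$; the relevant $\cf_1$-conditional covariance is $q_2-e_2\etil_3^\prime q_3\etil_3 e_2^\prime$, and Proposition~\ref{pr:transent} (with the original $a_{11}$ and $\gamma$) then yields $\Ttil_{2\rightarrow 1}^X$. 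Finally $\Ttil_{3\rightarrow 1|2}^X$ is obtained by subtraction via (\ref{eq:Xsplit}), which is what isolates the $\etil_3e_2^\prime\gamma^\prime a_{11}\gamma e_2\etil_3^\prime$ factor in (\ref{eq:TX31val}). The missing idea in your proposal is this passage through $\Xbar$ (or some equivalent device) that keeps $\Xtil_1$ as the ``future'' variable while still exploiting the $\cf_{12}$-filter.
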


\begin{proof}
It follows from (H2), (H4) and (\ref{eq:gamtil}) that
$\btil_{(12)3}=\atil_{(12)(12)}\gammatil$, and so the process $X$, regarded as comprising
the two components $\Xtil_{(12)}$ and $\Xtil_3$, satisfies hypotheses (H1) and (H2),
with $\gammatil$ playing the role of $\gamma$.  Proposition \ref{pr:filters} thus shows
that the Gaussian distribution $N(\Xbar(t),\etil_3^\prime q_3(t)\etil_3)$ is a regular
$\cf_{12}(t)$-conditional distribution for $X(t)$, where
$((\Xbar,\Vbar_{(12)})(t)\in\R^{n+k+\ktil},0\le t<\infty)$ satisfies the It\^{o} equation
\begin{eqnarray}
\Xbar(0)
  & = & \left\{\begin{array}{ll}
               \mu & \text{if }l+\ltil=0 \\
               \mu + \eta\begin{bmatrix}I_{l+\ltil} & 0\end{bmatrix}^\prime
               \Theta_{(12)} &  \text{otherwise}
			     \end{array}\right.,\quad \Vbar_{(12)}(0) = 0\quad\nonumber \\
d\Xbar
  & = & b\Xbar dt
        + \left(I_n
          +\etil_3^\prime q_3\gammatil^\prime \etil_{(12)}\right)\rhotil\;d\Vbar_{(12)}
         \label{eq:filter3} \\
d\Vbar_{(12)}
  & = & \rhotil^\prime \etil_{(12)}^\prime\gammatil e_3(X-\Xhat) dt
        + dV_{(12)}. \nonumber
\end{eqnarray}
Here $\Theta_{(12)}=\begin{bmatrix}I_{l+\ltil} & 0\end{bmatrix}\Theta$,
$V_{(12)}=\begin{bmatrix}I_{k+\ktil} & 0\end{bmatrix}V$ and
$\rhotil=\tau\begin{bmatrix}I_{k+\ktil} & 0\end{bmatrix}^\prime$.  ($\Theta_{(12)}$ is
void if $l+\ltil=0$.)

The process $\Xbar$, regarded as comprising the two components $\Xtil_1$ and
$(\Xtil_2, \Xbar_3)$, itself satisfies hypotheses (H1) and (H2), and so we may apply
Propositions \ref{pr:filters} and \ref{pr:transent} to find the transfer entropy
from $(\Xtil_2, \Xbar_3)$ to $\Xtil_1$.  Straightforward calculations show that the
$\cf_1(t)$-conditional covariance matrix of $(\Xtil_2(t),\Xbar_3(t))$ is
$q_2(t)-e_2\etil_3^\prime q_3(t)\etil_3e_2^\prime$, where $q_2$ is as defined in
(\ref{eq:riccati}), and so, according to (\ref{eq:T21val}), the transfer entropy from
$(\Xtil_2,\Xbar_3)$ to $\Xtil_1$ is
\[
T_{2\rightarrow 1}(s,t)
    - \half\int_s^t \tr\big(\gamma(r)^\prime a_{11}(r)\gamma(r)
      e_2\etil_3^\prime (q_3(r)-\qtil_3(r))\etil_3e_2^\prime\big)dr.
\]
Since $\Xbar_3$ is adapted to $\cf_{12}$, the chain rule shows that
\[
I((\Xtil_2,\Xbar_3)(0,s);\Xtil_1(s,t)\cond\cf_1(s))
  = I(\Xtil_2(0,s);\Xtil_1(s,t)\cond\cf_1(s)) = \Ttil_{2\rightarrow 1}^X(s,t).
\]
Together with (\ref{eq:Xsplit}), this proves (\ref{eq:TX31val}).
\end{proof}

$\Ttil_{2\rightarrow 1}^X$ and $\Ttil_{3\rightarrow 1|2}^X$ are natural candidates for
the transfer entropy between components of the three-component process $X$.  They both
quantify the disambiguation on the future of $\Xtil_1$ afforded by the past of
another component.  However, unlike $T_{2\rightarrow 1}$, $\Ttil_{2\rightarrow 1}^X$
is typically not a measure of the {\em influence} that $\Xtil_2$ has on $\Xtil_1$, since
it can be strictly positive even if $\btil_{12}=0$ and $\btil_{32}=0$.  This is so if
$\Xtil_1$ and $\Xtil_2$ inherit a common history from $\Xtil_3$.  Correspondingly,
$\Ttil_{3\rightarrow 1|2}^X$, measures only the {\em direct} influence that $\Xtil_3$
has on $\Xtil_1$; it does not include any influence that it may have on $\Xtil_1$
{\em via} $\Xtil_2$.

Applying Proposition \ref{pr:mutinf} to the process $\Xbar$ of (\ref{eq:filter3}), we
obtain the directed information associated with $\Ttil_{2\rightarrow 1}^X$:
\begin{equation}
\Dtil_{2\rightarrow 1}^X(t)
  = \int_0^t \Rtil_{2\rightarrow 1}^X(s)ds
  = I((\Theta_2,\Vbar_2(0,s));\Xtil_1(0,t)),
\end{equation}
where
\[
\Rtil_{2\rightarrow 1}^X(t)
  = \lim_{\delta\downarrow 0}\delta^{-1}\Ttil_{2\rightarrow 1}^X(t,t+\delta),
\]
$\Theta_2=\begin{bmatrix} 0 & I_\ltil \end{bmatrix}\Theta_{(12)}$ and
$\Vbar_2=\begin{bmatrix} 0 & I_\ktil \end{bmatrix}\Vbar$. $\Ttil_{2\rightarrow 1}^X$
and $\Dtil_{2\rightarrow 1}^X$ are measures of the influence that $\Xtil_2$ has on
$\Xtil_1$ in the system (\ref{eq:filter3}), where the non-$\cf_{12}$-adapted component
$\Xtil_3$ is replaced by additional dynamics that preserve the joint distribution of
$\Xtil_1$ and $\Xtil_2$.  $(\Theta_2,\Vbar_2)$ is Markov with respect to $\cf_{12}$, but
not with respect to $\cf$, illustrating the importance of the filtration in determining
influence according to PI. We can clearly also define
$\Dtil_{3\rightarrow 1|2}^X = D_{2\rightarrow 1}-\Dtil_{2\rightarrow 1}^X$.

\subsection{Splitting by components of $(\Xi_2,W_2)$}

Unlike $X_1$ and $X_2$, all components of $(\Xi,W)$ are Markov with respect to $\cf$, and
so, according to PI, any dependency existing between the past of a component
of $(\Xi,W)$ and the future of $X$ can be attributed to the influence that the former
has on the latter.  In this section we apply the chain rule to the right-hand side of
(\ref{eq:transour}) in order to identify the influence that sub-components of
$(\Xi_2,W_2)$ have on $\Xtil_1$.  Although this can be done without further assumptions,
the split obtained is more directly connected with the components $\Xtil_2$ and $\Xtil_3$
under the following hypothesis:
\begin{enumerate}
\item[(H5)] $\phi_{23}=0$, and for any $0\le t<\infty$, $\alpha_{23}(t)=0$.
\end{enumerate}
According to (\ref{eq:transour}) and the chain rule
\begin{equation}
\Ttil_{(2,3)\rightarrow 1}(s,t) =
T_{2\rightarrow 1}(s,t)
  = \Ttil_{2\rightarrow 1}^W(s,t) + \Ttil_{3\rightarrow 1|2}^W(s,t), \label{eq:Wsplit}
\end{equation}
where
\begin{eqnarray}
\Ttil_{2\rightarrow 1}^W(s,t)
  & = & I((\Xitil_2,\Wtil_2(0,s));\Xtil_1(s,t)\cond\cf_1(s)), \nonumber \\
        [-1.5ex] \label{eq:Xtesplit} \\ [-1.5ex]
\Ttil_{3\rightarrow 1|2}^W(s,t)
  & = & I((\Xitil_3,\Wtil_3(0,s));\Xtil_1(s,t)\cond\cf_1(s)\vee\cf_2^W(s)), \nonumber
\end{eqnarray}
$\Xitil_i=\etil_ie_2^\prime\Xi_2$ and $\Wtil_i=\etil_ie_2^\prime W_2$ ($i=2,3$), and
$\cf_2^W$ is the filtration generated by $(\Xitil_2,\Wtil_2)$.  If (H5) holds then
$(\Xitil_2,\Wtil_2)$ and $(\Xitil_3,\Wtil_3)$ influence $X$ only through the components
$\Xtil_2$ and $\Xtil_3$, respectively, and so it is reasonable to think of
$\Ttil_{2\rightarrow 1}^W$ and $\Ttil_{3\rightarrow 1|2}^W$ as being transfer entropies
from $\Xtil_2$ and $\Xtil_3$.

Let $(q_2^c(t)\in\SI_{n_2}^+,0\le t<\infty)$ satisfy the matrix Riccati equation,
\begin{eqnarray}
q_2^c(0)
  & = & e_2\etil_3^\prime \phi_{33} \etil_3e_2^\prime \nonumber \\
        [-1.5ex] \label{eq:criccati} \\ [-1.5ex]
\dot{q}_2^c
  & = & (b_{22}-a_{21}\gamma)q_2^c
        + q_2^c(b_{22}-a_{21}\gamma)^\prime  + g
        - q_2^c\gamma^\prime a_{11}\gamma q_2^c, \nonumber
\end{eqnarray}
where
\begin{equation}
g(t) = \left\{\begin{array}{ll}
              e_2\etil_3^\prime\alpha_{33}(t) \etil_3e_2^\prime & \text{if }0\le t\le s \\
              \alpha(t) & \text{otherwise}.
              \end{array} \right. \label{eq:gdef}
\end{equation}

\begin{proposition} \label{pr:transentc}
If (H1), (H2) and (H5) hold, then, for any $0\le s\le t<\infty$,
\begin{equation}
\Ttil_{3\rightarrow 1|2}^W(s,t)
  = \half\int_s^t\tr\left(\gamma(r)^\prime a_{11}(r)\gamma(r)
    (q_2^c(r)-\qtil_2^c(r))\right)dr, \label{eq:TW31val}
\end{equation}
where $(\qtil_2^c(t), s\le t<\infty)$ satisfies (\ref{eq:criccati}) over the time interval
$[s,\infty)$ and $\qtil_2^c(s)=0$.  (NB.~$\Ttil_{2\rightarrow 1}^W$ can be found from
(\ref{eq:T21val}), (\ref{eq:Wsplit}) and (\ref{eq:TW31val}).)
\end{proposition}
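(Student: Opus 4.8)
The plan is to run the argument of the proof of Proposition~\ref{pr:transent} with the observation filtration $\cf_1$ replaced by $\cf^c(t):=\cf_1(t)\vee\cf_2^W(t\wedge s)$ and the Riccati equation (\ref{eq:riccati}) replaced by (\ref{eq:criccati}). The only genuinely new ingredient is the analogue of Proposition~\ref{pr:filters} for $\cf^c$ (a Kalman--Bucy description of the $\cf^c$-conditional law of $X$); once that is in hand, the remainder of the proof of Proposition~\ref{pr:transent} and the manipulation (\ref{eq:xwbplus}) carry over with only notational changes.

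To obtain the $\cf^c$-analogue of Proposition~\ref{pr:filters} I would proceed in two phases. For the conditioning up to time $s$: recall from (\ref{eq:x2comp}) that $X_2=X_2^1+X_2^2$ with $dX_2^2=(b_{22}-a_{21}\gamma)X_2^2\,dt+dW_2$ and $X_2^2(0)=\Xi_2$; writing $W_2=e_2\etil_2^\prime\Wtil_2+e_2\etil_3^\prime\Wtil_3$ and $\Xi_2=e_2\etil_2^\prime\Xitil_2+e_2\etil_3^\prime\Xitil_3$, linearity gives $X_2^2=X_2^{2,a}+X_2^{2,b}$, where $X_2^{2,a}$ (resp.~$X_2^{2,b}$) is the strong solution of the same equation driven only by $(\Xitil_2,\Wtil_2)$ (resp.~$(\Xitil_3,\Wtil_3)$). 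Under (H5) the pairs $(\Xitil_2,\Wtil_2)$ and $(\Xitil_3,\Wtil_3)$ are independent, $X_2^{2,a}$ is adapted to $\cf_2^W$, and $\sigma(X_2^{2,a}(0,t))=\cf_2^W(t)$, so conditioning on $\cf_1(t)\vee\cf_2^W(t)=\sigma((X_1,X_2^{2,a})(0,t))$ is the ``$\cf_1$-type'' conditioning for the augmented process $\Xhac:=(X_1,X_2^{2,a},X_2)$, regarded as having first block $(X_1,X_2^{2,a})$ and second block $X_2$. A routine computation --- using (H1), (H2) and, crucially, $\phi_{23}=0$, $\alpha_{23}=0$ from (H5) --- shows that $\Xhac$ satisfies (H1) and (H2) with $\ahac_{11}$ block-diagonal with blocks $a_{11}$ and $e_2\etil_2^\prime\alpha_{22}\etil_2 e_2^\prime$ and with $\breve\gamma=\begin{bmatrix}\gamma^\prime & 0\end{bmatrix}^\prime$, and that its Riccati equation (\ref{eq:riccati}) coincides, on $[0,s]$, with (\ref{eq:criccati}): the initial covariance $e_2\etil_3^\prime\phi_{33}\etil_3 e_2^\prime$ and the driving term $e_2\etil_3^\prime\alpha_{33}\etil_3 e_2^\prime$ of (\ref{eq:criccati}) are, respectively, the covariance of $X_2(0)$ and the innovation covariance rate of $X_2$ that remain after conditioning on $\Xitil_2$ and on $\Wtil_2$. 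Proposition~\ref{pr:filters}, applied to $\Xhac$ and evaluated at time $s$, then shows that $N(\Xhat^c(s),e_2^\prime q_2^c(s)e_2)$ is a regular $\cf^c(s)$-conditional distribution for $X(s)$. For the second phase ($t\ge s$), since $X$ is Markov with respect to $\cf$ and $\cf^c(s)\subset\cf(s)$, the driving noise of $X$ after time $s$ is independent of $\cf^c(s)$; hence, conditional on $\cf^c(s)$, $(X(r),\,r\ge s)$ is a Gaussian diffusion with the same coefficients started from $N(\Xhat^c(s),e_2^\prime q_2^c(s)e_2)$ at time $s$, and the Kalman--Bucy filter for it with observations $X_1$ (Theorem~10.3 in \cite{lish1}, exactly as in the proof of Proposition~\ref{pr:filters}) produces an innovations process $\Wbar_1^c$ (with $\Wbar_1^c(s)=0$) that is an $\R^k$-valued $(\cf^c(t),\,t\ge s)$-Brownian motion satisfying $\cf^c(t)=\cf^c(s)\vee\sigma(\Wbar_1^c(r),\,s\le r\le t)$, and gives $N(\Xhat^c(t),e_2^\prime q_2^c(t)e_2)$ as a regular $\cf^c(t)$-conditional distribution for $X(t)$, where on $[s,\infty)$ $q_2^c$ satisfies (\ref{eq:criccati}) with $g=\alpha$. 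This reproduces (\ref{eq:criccati})--(\ref{eq:gdef}).

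With this analogue of Proposition~\ref{pr:filters} established, the argument of the proof of Proposition~\ref{pr:transent} applies verbatim with $\cf_1,q_2,\qtil_2,\Wbar_1$ replaced by $\cf^c,q_2^c,\qtil_2^c,\Wbar_1^c$ --- note that $\qtil_2^c$ coincides with $\qtil_2$ on $[s,\infty)$, so the auxiliary process $Z$ of that proof may be re-used, now conditioned additionally on $\cf^c(s)$. Forming the difference $\Theta^c$ of the two filter estimates of $X_2$ (the $\cf^c$-filter and the $Z$-filter conditioned on $\cf^c(s)$), one finds that $\Theta^c$ has $\cf^c(t)$-conditional mean $0$ and covariance $q_2^c(t)-\qtil_2^c(t)$ and satisfies a linear It\^{o} equation of the form (\ref{eq:Thetadef}); Theorem~3.1 in \cite{maza2} then gives
\[
I(X(0,s);\Wbar_1^c(s,t))=\half\int_s^t\tr\big(\gamma(r)^\prime a_{11}(r)\gamma(r)(q_2^c(r)-\qtil_2^c(r))\big)\,dr .
\]
Finally, as in (\ref{eq:xwbplus}), since $\Wbar_1^c(s,t)$ is independent of $\cf^c(s)=\sigma(X_1(0,s),\Xitil_2,\Wtil_2(0,s))$, subtracting $I((X_1(0,s),\Xitil_2,\Wtil_2(0,s));\Wbar_1^c(s,t))=0$, applying the chain rule, using that $X(0,s)$ and $(\Xitil_3,\Wtil_3(0,s))$ generate the same $\sigma$-algebra once $\cf^c(s)$ is adjoined, adjoining the $\cf^c(s)$-measurable variable $X_1(s)$, and using $\cf^c(s)\vee\sigma(\Wbar_1^c(s,t))=\cf^c(t)=\cf^c(s)\vee\sigma(X_1(s,t))$, one obtains $I(X(0,s);\Wbar_1^c(s,t))=\Ttil_{3\rightarrow 1|2}^W(s,t)$, which is (\ref{eq:TW31val}).

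I expect the first phase of the filter construction to be the main obstacle: identifying the conditioning on $\cf_2^W$ with observation of the auxiliary diffusion $X_2^{2,a}$, checking that $\Xhac$ satisfies (H1), (H2) with the stated coefficients, and verifying that its Riccati equation reduces on $[0,s]$ precisely to (\ref{eq:criccati}) --- this is where (H5) enters, through the independence of $(\Xitil_2,\Wtil_2)$ and $(\Xitil_3,\Wtil_3)$, which is what makes the $\cf^c$-conditional covariance of $X_2$ retain the block form $e_2\etil_3^\prime(\fndot)\etil_3 e_2^\prime$ at time $0$ and over $[0,s]$. The remaining items --- the integrability bounds for the coefficients of $\Xhac$ and for $\breve\gamma$ (which follow from (\ref{eq:coefbnd}), (\ref{eq:gambnd}) and the continuity of $q_2^c,\qtil_2^c$ by the Cauchy--Schwarz estimates implicit in the earlier proofs), and the stitching of the two filter phases at time $s$ via the Markov property of $X$ --- are routine, much as in the proofs of Propositions~\ref{pr:transent} and~\ref{pr:transent3}.
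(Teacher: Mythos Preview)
Your strategy differs from the paper's. Rather than constructing an augmented filter, the paper exploits linearity and the independence furnished by (H5) to reduce directly to Proposition~\ref{pr:transent}: it defines the $\R^n$-valued process $\Phi(t)=\etil_2^\prime\Xitil_2+\int_0^t b(r)\Phi(r)\,dr+\etil_2^\prime\Wtil_2(s\wedge t)$, which is $\cf_2^W(s)$-measurable for every $t$, and sets $Z=X-\Phi$. Under (H5), $Z$ is independent of $\Phi$, so a change of variables inside the conditional mutual information removes the conditioning on $\cf_2^W(s)$ altogether, and $\Ttil_{3\rightarrow 1|2}^W(s,t)$ collapses to the ordinary transfer entropy from $Z_2$ to $Z_1$. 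Since $Z$ is a Gaussian diffusion of the same form as $X$ but with $\phi_{22}$ replaced by $0$ and $\alpha_{22}(r)$ replaced by $0$ for $r\le s$, its Riccati equation (\ref{eq:riccati}) is precisely (\ref{eq:criccati}), and Proposition~\ref{pr:transent} gives (\ref{eq:TW31val}) in one line.

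Your filter-based route is sound in outline, and your Phase~2 together with the adaptation of (\ref{eq:xwbplus}) is correct; but Phase~1 has a genuine gap. You assert that the augmented process $\Xhac=(X_1,X_2^{2,a},X_2)$ satisfies (H1), yet its first-block diffusion matrix $\ahac_{11}$ is block-diagonal with blocks $a_{11}$ and $e_2\etil_2^\prime\alpha_{22}\etil_2 e_2^\prime$, so $\rank(\ahac_{11}(t))=k+\rank(\alpha_{22}(t))$. This is constant in $t$ only if $\rank(\alpha_{22}(\cdot))$ is constant, which is essentially the content of (H3) --- a hypothesis \emph{not} assumed in Proposition~\ref{pr:transentc}. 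Without it, Proposition~\ref{pr:filters} cannot be invoked for $\Xhac$: its proof builds the innovations from the reduced eigen-decomposition of the first-block diffusion matrix, and that construction fails when the rank varies. The natural repair --- observing that, by independence and linearity, conditioning on $\cf_2^W$ merely translates $X$ by an $\cf_2^W$-measurable process --- is exactly the paper's subtraction trick, after which the augmented filter becomes unnecessary.
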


\begin{proof}
Fix $0\le s<\infty$, let $(\Phi(t)\in\R^n, 0\le t<\infty)$ satisfy
\[
\Phi(t) = \etil_2^\prime\Xitil_2 + \int_0^t b(r)\Phi(r) dr
          + \etil_2^\prime \Wtil_2(s\wedge t),
\]
and let $Z= X-\Phi$.  Then $\sigma(\Phi(0,s))=\cf_2^W(s)$, $\Phi(s,t)$ is
$\cf_2^W(s)$-measurable for all $t\ge s$, and $Z$ is independent of $\Phi$.  Now
\begin{eqnarray*}
\Ttil_{3\rightarrow 1|2}^W
  & = & I((\Xitil_3,\Wtil_3(0,s));\Xtil_1(s,t)\cond \Xtil_1(0,s),\Xitil_2,\Wtil_2(0,s)) \\
  & = & I((\Xitil_3,\Wtil_3(0,s));\Phi_1(s,t)+Z_1(s,t)\cond Z_1(0,s),\Phi(0,s)) \\
  & = & I((\Xitil_3,\Wtil_3(0,s));Z_1(s,t)\cond Z_1(0,s),\Phi(0,s)) \\
  & = & I((\Xi_2,W_2(0,s));Z_1(s,t)\cond Z_1(0,s))
        - I(\Phi(0,s);Z_1(s,t)\cond Z_1(0,s)) \\
  & = & I((\Xi_2,W_2(0,s));Z_1(s,t)\cond Z_1(0,s)),
\end{eqnarray*}
where we have used a change of variables argument in the integral of (\ref{eq:kldiv}) in
the third step, the chain rule in the fourth step, and the independence of $\Phi$ and
$Z$ in the final step.  The special case of Proposition \ref{pr:transent}, in which
$\phi_{22}=0$ and $\alpha_{22}(t)=0$ for $0\le t\le s$, shows that the
final mutual information here is equal to the right-hand side of (\ref{eq:TW31val}).
\end{proof}

We can clearly also use $\Ttil_{2\rightarrow 1}^W$ and $\Ttil_{3\rightarrow 1|2}^W$ to
define a pair of directed information quantities, $\Dtil_{2\rightarrow 1}^W$ and
$\Dtil_{3\rightarrow 1|2}^W$.


\begin{thebibliography}{10}

\bibitem{ammi1}
  P-O.~Amblard and O.J.J~Michel, The relation between Granger causality and directed
  information theory: a review, \textit{Entropy} \textbf{15} (2013) 113--143.
\bibitem{coth1}
  T.M.~Cover and J.A.~Thomas, \textit{Elements of Information Theory} (Wiley, 2006).
\bibitem{dunc1}
  T.E.~Duncan, On the calculation of mutual information, \textit{SIAM J.~Appl.~Math.}
  \textbf{19} (1970) 215--220.
\bibitem{kazz1}
  T.T.~Kadota, M.~Zakai and J.~Ziv, Mutual information of the white Gaussian channel
  with and without feedback, \textit{IEEE Trans. Information Theory} \textbf{17} (1971)
  368--371.
\bibitem{lish1}
  R.S.~Liptser and A.N.~Shiryayev, \textit{Statistics of Random Processes
  I---General Theory} (Springer, 2001).
\bibitem{mark1}
  H.~Marko, The bidirectional information theory---a generalisation of information theory,
  \textit{IEEE Trans.~Communications} \textbf{21} (1973) 1345--1351.
\bibitem{mass1}
  J.L.~Massey, Causality, feedback and directed information, in \textit{Proceedings of the
  International Syposium on Information Theory and its Applications}, (1990) 303--305.
\bibitem{mass2}
  J.L.~Massey and P.C.~Massey, Conservation of mutual and directed information, in
  \textit{Proceedings of the International Symposium on Information Theory}, (2005)
  157--158.
\bibitem{maza1}
  E.~Mayer-Wolf and M.~Zakai, (1984) On a formula relating the Shannon information
  to the Fisher information for the filtering problem, in: \textit{Filtering and
  Control of Random Processes}, H.~Korezlioglu, G.~Mazziotto and S.~Szpirglas, S. (eds.),
  \textit{Lecture Notes in Control and Information Sciences} \textbf{61} (Springer, 1984)
  164--171.
\bibitem{maza2}
  E.~Mayer-Wolf and M.~Zakai, Some relations between mutual information and estimation
  error in Wiener space, \textit{Ann.~Appl.~Probab.} \textbf{17} (2007) 1102--1116.
\bibitem{mine2}
  S.K.~Mitter and N.J.~Newton, Information and entropy flow in the Kalman-Bucy filter,
  \textit{J.~Statist.~Phys.} \textbf{118} (2005) 145--167.
\bibitem{newt1}
  N.J.~Newton, Dual Kalman-Bucy filters and interactive entropy production,
  \textit{SIAM J.~Control Optim.} \textbf{45} (2006) 998--1016.
\bibitem{newt2}
  N.J.~Newton, Dual nonlinear filters and entropy production,
  \textit{SIAM J.~Control Optim.} \textbf{46} (2007) 1637--1663.
\bibitem{newt3}
  N.J.~Newton, Interactive statistical mechanics and nonlinear filtering,
  \textit{J.~Statist~Phys.} \textbf{133} (2008) 711--737.
\bibitem{pear1}
  J.~Pearl, \textit{Causality: Models, Reasoning and Inference}, Cambridge University
  Press (2000).
\bibitem{pins1}
  M.S.~Pinsker, \textit{Information and Information Stability of Random Variables and
  Processes}, (Holden-Day, 1964)
\bibitem{rowi2}
   L.C.G.~Rogers and D.~Williams, \textit{Diffusions,
   Markov Processes and Martingales: Volume 1---Foundations}, (Cambridge University Press,
   2000)
\bibitem{sdnm1}
  H.~Sandberg, J-C.~Delvenne, N.J.~Newton and S.K.~Mitter, Maximum work extraction and
  implementation costs for nonequilibrium Maxwell's demons, \textit{Phys.~Rev.~E}
  \textbf{90} (2015) 042119
\bibitem{schr1}
  T.~Schreiber, Measuring information transfer, \textit{Phys.~Rev.~Lett.} \textbf{85}
  (2000) 461--465.
\bibitem{shan1}
  C.E.~Shannon, A mathematical theory of communication, \textit{Bell System Technical
  Journal}, \textbf{27} (1948) 379--423 and 623--656.
\bibitem{wekp1}
  T.~Weissman, Y-H.~Kim and H.H.~Permuter, Directed information, causal estimation and
  communication in continuous time, \textit{IEEE Trans.~Inform.~Theory} \textbf{59}
  (2013) 1271--1287.
\end{thebibliography}
\end{document}